\newtheorem{theorem}{Theorem}[section]
\newtheorem{lemma}[theorem]{Lemma}
\newtheorem{remark}[theorem]{Remark}
\newenvironment{proof}[1][Proof]{\noindent\textbf{#1.} }{\ \rule{0.5em}{0.5em}}
\numberwithin{equation}{section}
\begin{document}

\title{A-priori gradient bound for elliptic systems under either slow or
fast growth conditions}
\author{Tommaso Di Marco \& Paolo Marcellini \\
%EndAName
{\small Dipartimento di Matematica e Informatica "Ulisse Dini", Universit%
\`{a} di Firenze, Italy}}
\date{}
\maketitle

\begin{abstract}
We obtain an \textit{a-priori} $W_{\limfunc{loc}}^{1,\infty }\left( \Omega ;%
\mathbb{R}^{m}\right) -$bound for solutions in $\Omega \subset \mathbb{R}%
^{n} $, $n\geq 2$, to the elliptic system 
\begin{equation*}
\sum_{i=1}^{n}\frac{\partial }{\partial x_{i}}\left( \frac{g_{t}\left(
x,\left\vert Du\right\vert \right) }{\left\vert Du\right\vert }%
u_{x_{i}}^{\alpha }\right) =0,\;\;\;\;\;\alpha =1,2,\ldots ,m,
\end{equation*}%
where $g\left( x,t\right) $, $g:\Omega \times \left[ 0,\infty \right)
\rightarrow \left[ 0,\infty \right) $, is a Carath\'{e}odory function,
convex and increasing with respect to the gradient variable $t\in \left[
0,\infty \right) $. We allow $x-$dependence, which turns out to be a
relevant difference with respect to the autonomous case and not only a
technical perturbation. Our assumptions allow us to consider \textit{both
fast and slow growth}. We allow fast growth even of \textit{exponential type}%
; and slow growth, for instance of \textit{Orlicz-type} with
energy-integrands such as $g\left( x,\left\vert Du\right\vert \right)
=|Du|\log (1+|Du|)$ or, when $n=2,3$, even \textit{asymptotic linear growth}
with energy integrands of the type 
\begin{equation*}
g\left( x,\left\vert Du\right\vert \right) =\left\vert Du\right\vert
-a\left( x\right) \sqrt{\left\vert Du\right\vert }\,.
\end{equation*}
\end{abstract}

\section{Introduction\label{Section: Introduction}}

We are concerned with the regularity of \textit{local minimizers} of
energy-integrals of the calculus of variations of the form 
\begin{equation}
F\left( u\right) =\int_{\Omega }f\left( x,Du\right) \,dx\,,
\label{energy-integral 1}
\end{equation}%
where $\Omega $ is an open set of $\mathbb{R}^{n}$ for some $n\geq 2$ and $Du
$ is the $m\times n$ gradient-matrix of a map $u:\Omega \subset \mathbb{R}%
^{n}\rightarrow \mathbb{R}^{m}$ with $m\geq 1$. Here $f:\Omega \times 
\mathbb{R}^{m\times n}\rightarrow \mathbb{R}$ is a convex \textit{Carath\'{e}%
odory integrand}; i.e., $f=f\left( x,\xi \right) $ is measurable with
respect to $x\in \mathbb{R}^{n}$ and it is a convex function with respect to 
$\xi \in \mathbb{R}^{m\times n}$. A \textit{local minimizer} of the
energy-functional $F$ in (\ref{energy-integral 1}) is a map $u:\Omega
\subset \mathbb{R}^{n}\rightarrow \mathbb{R}^{m}$ satisfying the inequality 
\begin{equation*}
\int_{\Omega }f\left( x,Du\left( x\right) \right) \,dx\leq \int_{\Omega
}f\left( x,Du\left( x\right) +D\varphi \left( x\right) \right) \,dx
\end{equation*}%
for every test function $\varphi $ with compact support in $\Omega ;$ i.e., $%
\varphi \in C_{0}^{1}\left( \Omega ;\mathbb{R}^{m}\right) $. Under some 
\textit{growth conditions} on $f$ (see the proof of Lemma \ref{lemma1}\ for
details) every local minimizer $u$ of $F$ is a \textit{weak solution }to the 
\textit{nonlinear elliptic system} of $m$\textit{\ }partial differential
equations 
\begin{equation}
\sum_{i=1}^{n}\frac{\partial }{\partial x_{i}}a_{i}^{\alpha }\left(
x,Du\right) =0,\;\;\;\;\;\alpha =1,2,\ldots ,m,
\label{differential system 1}
\end{equation}%
where $a_{i}^{\alpha }\left( x,\xi \right) =\frac{\partial f}{\partial \xi
_{i}^{\alpha }}=f_{\xi _{i}^{\alpha }}$ for every $i=1,2,\ldots ,n$ and $%
\alpha =1,2,\ldots ,m$.

It is well known that in the \textit{vector-valued case} $m\geq 2$ in
general we cannot expect \textit{everywhere regularity} of local minimizers
of integrals as in (\ref{energy-integral 1}), or of weak solutions to
nonlinear differential systems as in (\ref{differential system 1}). Examples
of not smooth solutions are originally due to De Giorgi \cite{De Giorgi 1968}%
, Giusti-Miranda \cite{Giusti-Miranda 1968}, Ne\v{c}as \cite{Necas 1977},
and more recently to \v{S}ver\'{a}k-Yan \cite{Sverak-Yan 2000}, De
Silva-Savin \cite{DeSilva-Savin 2010}, Mooney-Savin \cite{Mooney-Savin 2016}%
, Mooney \cite{Mooney 2019}.

A classical assumption finalized to the \textit{everywhere regularity} is a
modulus-dependence in the energy integrand; i.e., in terms of the function $%
f $, we require that 
\begin{equation*}
f\left( x,\xi \right) =g\left( x,\left\vert \xi \right\vert \right)
\end{equation*}%
with a Carath\'{e}odory function $g=g\left( x,t\right) $. Since $\frac{%
\partial \left\vert \xi \right\vert }{\partial \xi _{i}^{\alpha }}=\frac{\xi
_{i}^{\alpha }}{\left\vert \xi \right\vert }$, the differential system
assumes the form 
\begin{equation}
\sum_{i=1}^{n}\frac{\partial }{\partial x_{i}}\left( \frac{g_{t}\left(
x,\left\vert Du\right\vert \right) }{\left\vert Du\right\vert }%
u_{x_{i}}^{\alpha }\right) =0,\;\;\;\;\;\alpha =1,2,\ldots ,m.
\label{differential system 2}
\end{equation}

In this \textit{nonlinear context}, the first regularity result is due to
Karen Uhlenbeck obtained in her celebrated paper \cite{Uhlenbeck 1977},
published in 1977 and related to the energy-integral $f\left( x,\xi \right)
=g\left( x,\left\vert \xi \right\vert \right) =\left\vert \xi \right\vert
^{p}$ with exponents $p\geq 2$. Later Marcellini \cite{Marcellini 1996} in
1996 considered general energy-integrands $g\left( \left\vert \xi
\right\vert \right) $ allowing \textit{exponential growth} and
Marcellini-Papi \cite{Marcellini-Papi 2006} in 2006 also some \textit{slow
growth}. Mascolo-Migliorini \cite{Mascolo-Migliorini 2003} studied some
cases of integrands $g\left( x,\left\vert \xi \right\vert \right) $ which
however ruled out the slow growth and power growth with exponents $p\in
\left( 1,2\right) $. Only recently Beck-Mingione introduced in the integrand
some $x-$dependence of the form $\int_{\Omega }\left\{ g\left( \left\vert
Du\right\vert \right) +h\left( x\right) \cdot u\right\} \,dx$ and they
considered some sharp assumptions on the function $h\left( x\right) $, of
the type $h\in L\left( n,1\right) \left( \Omega ;\mathbb{R}^{m}\right) $ in
dimension $n>2$ (i.e., $\int_{0}^{+\infty }\func{meas}\left\{ x\in \Omega
:\left\vert h\left( x\right) \right\vert >\lambda \right\} ^{1/n}d\lambda
<+\infty $; note that $L^{n+\varepsilon }\subset L\left( n,1\right) \subset
L^{n}$), or $h\in L^{2}\left( \log L\right) ^{\alpha }\left( \Omega ;\mathbb{%
R}^{m}\right) $ for some $\alpha >2$ when $n=2$. Note that these assumptions
on $h$ are independent on the principal part $g\left( \left\vert \xi
\right\vert \right) $. Beck-Mingione obtained the local boundedness of the
gradient $Du$ of the local minimizer under some growth assumptions on $%
g\left( \left\vert \xi \right\vert \right) $, which however is assumed to be
independent of $x$.

In this paper we allow $x-$dependence in the principal part of the
energy-integrand; i.e., under the notation $\left\vert \xi \right\vert =t$,
we consider a general integrand of the form $g=g\left( x,t\right) $, with $%
g:\Omega \times \left[ 0,\infty \right) \rightarrow \left[ 0,\infty \right) $
Carath\'{e}odory function, convex and increasing with respect to $t\in \left[
0,\infty \right) $. Our assumptions, stated below in (\ref{main assumptions}%
), allow us to consider \textit{both fast and slow growth} on the integrand $%
g\left( x,\left\vert Du\right\vert \right) $.

Model energy-integrals that we have in mind are, for instance, \textit{%
exponential growth} with local Lipschitz continuous coefficients $a,b$ ($%
a\left( x\right) ,b\left( x\right) \geq c>0$) 
\begin{equation}
\int_{\Omega }e^{a\left( x\right) \left\vert Du\right\vert
^{2}}\,dx\;\;\;\;or\;\;\;\;\int_{\Omega }b\left( x\right) \exp \left( \ldots
\exp \left( a\left( x\right) \left\vert Du\right\vert ^{2}\right) \right)
\,dx\,;  \label{example 0}
\end{equation}%
\textit{variable exponents} ($a,p\in W_{\limfunc{loc}}^{1,\infty }\left(
\Omega \right) $, $a\left( x\right) \geq c>0$ and $p\left( x\right) \geq p>1$%
) 
\begin{equation}
\int_{\Omega }a\left( x\right) \left\vert Du\right\vert ^{p\left( x\right)
}\,dx\,\;\;\;\;\text{or}\;\;\;\;\int_{\Omega }a\left( x\right) \left(
1+\left\vert Du\right\vert ^{2}\right) ^{p\left( x\right) /2}\,dx\,;
\label{example 2}
\end{equation}%
of course the classical $p-$\textit{Laplacian} energy-integral, with a
constant $p$ strictly greater than $1$ and integrand $f\left( x,Du\right)
=a\left( x\right) \left\vert Du\right\vert ^{p}$, is covered by the example (%
\ref{example 2}): the theory considered here and the Theorem \ref{Theorem 1}
below apply to the $p-$Laplacian. Also \textit{Orlicz-type} energy-integrals
(see Chlebicka \cite{Chlebicka 2018}, Chlebicka \textit{et al.} \cite%
{Chlebicka 2019}), again with local Lipschitz continuous exponent $p\left(
x\right) \geq p>1$, of the type 
\begin{equation}
\int_{\Omega }a\left( x\right) |Du|^{p\left( x\right) }\log (1+|Du|)\,dx\,;
\label{example 2bis}
\end{equation}%
note that the a-priori estimate in Theorem \ref{Theorem 1} below holds also
for some cases with \textit{slow growth}, i.e., when $p\left( x\right) \geq 1
$, in particular when $p\left( x\right) $ is identically equal to $1$. See (%
\ref{example 2 bis again}) and the details in the next section. A class of
energy-integrals of the form 
\begin{equation}
\int_{\Omega }h\left( a\left( x\right) \left\vert Du\right\vert \right)
\,dx\,\;\;\;\;\text{or}\;\;\;\;\int_{\Omega }b\left( x\right) h\left(
a\left( x\right) \left\vert Du\right\vert \right) \,dx\,,  \label{example 3}
\end{equation}%
with $a\left( x\right) ,b\left( x\right) $ locally Lipschitz continuous and
nonnegative coefficients in $\Omega $ and $h:\left[ 0,+\infty \right)
\rightarrow \left[ 0,+\infty \right) $ convex increasing function of class $%
W_{\limfunc{loc}}^{2,\infty }\left( \left[ 0,+\infty \right) \right) $ as in
(\ref{the function h(t)}) below. Also some $g\left( x,\left\vert \xi
\right\vert \right) $ with \textit{slow growth}, precisely \textit{linear
growth} as $t=\left\vert Du\right\vert \rightarrow +\infty $, such as, for $%
n=2,3$, 
\begin{equation}
\int_{\Omega }\left\{ \left\vert Du\right\vert -a\left( x\right) \sqrt{%
\left\vert Du\right\vert }\right\} \,dx\,,  \label{example 4}
\end{equation}%
with $a\in W_{\limfunc{loc}}^{1,\infty }\left( \Omega \right) $, $a\left(
x\right) \geq c>0$ (here more precisely $t\rightarrow t-a\left( x\right) 
\sqrt{t}$ means a smooth convex function in $\left[ 0,+\infty \right) $,
with derivative equal to zero at $t=0$, which coincide with $t-a\left(
x\right) \sqrt{t}$ for $t\geq t_{0}$, for a given $t_{0}>0$, and for $x\in
\Omega $). Some of these examples are covered by the regularity theories
already in the literature; for instance, as already quoted, the paper \cite%
{Uhlenbeck 1977} by Uhlenbeck for the case $g\left( x,\left\vert \xi
\right\vert \right) =\left\vert \xi \right\vert ^{p}$ with exponents $p\geq 2
$, \cite{Marcellini 1996} and \cite{Marcellini-Papi 2006} with general
integrands $g\left( \left\vert \xi \right\vert \right) $ with \textit{%
exponential growth} too, Mascolo-Migliorini \cite{Mascolo-Migliorini 2003}
with integrands $g\left( x,\left\vert \xi \right\vert \right) $ not allowing
slow growth, Beck-Mingione \cite{Beck-Mingione 2019} with $x-$dependence on
the lower order terms.

For completeness related to these researches we mention the \textit{double
phase problems}, recently intensively studied by Colombo-Mingione \cite%
{Colombo-Mingione 2015a}, \cite{Colombo-Mingione 2015b}
Baroni-Colombo-Mingione \cite{Baroni-Colombo-Mingione 2015}, \cite%
{Baroni-Colombo-Mingione 2016}, \cite{Baroni-Colombo-Mingione 2018} and the 
\textit{double phase} with \textit{variable exponents} by
Eleuteri-Marcellini-Mascolo \cite{Eleuteri-Marcellini-Mascolo 2016a}, \cite%
{Eleuteri-Marcellini-Mascolo 2016b}, \cite{Eleuteri-Marcellini-Mascolo 2018}%
. See also Esposito-Leonetti-Mingione \cite{Esposito-Leonetti-Mingione 2004}%
, R\v{a}dulescu-Zhang \cite{Radulescu-Zhang 2018}, Cencelja-R\u{a}%
dulescu-Repov\v{s} \cite{Cencelja-Radulescu-Repovs 2018} and De Filippis 
\cite{DeFilippis 2018}. For related recent references we quote \cite%
{Marcellini 2019}, \cite{Marcellini DiscrContDinSystems 2019} and
Bousquet-Brasco \cite{Bousquet-Brasco 2019},
Carozza-Giannetti-Leonetti-Passarelli \cite%
{Carozza-Giannetti-Leonetti-Passarelli 2018},
Cupini-Giannetti-Giova-Passarelli \cite{Cupini-Giannetti-Giova-Passarelli
2018}, Cupini-Marcellini-Ma\-scolo \cite{Cupini-Marcellini-Mascolo 2009}, 
\cite{Cupini-Marcellini-Mascolo 2012}, \cite{Cupini-Marcellini-Mascolo 2018}%
, De Filippis-Mingione \cite{DeFilippis-Mingione 2019}, Harjulehto-H\"{a}st%
\"{o}-Toivanen \cite{Harjulehto-Hasto-Toivanen 2017}, H\"{a}st\"{o}-Ok \cite%
{Hasto-Ok 2019}, Mingione-Palatucci \cite{Mingione-Palatucci 2019}.

Without loss of generality, by changing $g\left( x,t\right) $ with $g\left(
x,t\right) -g\left( x,0\right) $ if necessary, we can reduce ourselves to
the case $g(x,0)=0$ for almost every $x\in \Omega $. We assume that the
partial derivatives $g_{t}$, $g_{tt}$, $g_{tx_{k}}$ exist (for every $%
k=1,2,\ldots n$) and that they are Carath\'{e}odory functions too, with $%
g_{t}\left( x,0\right) =0$.

In the next section we show that the following assumptions (\ref{main
assumptions}), (\ref{the function h(t)}) cover the model examples from (\ref%
{example 0}) to (\ref{example 4}). Precisely, we require that the following
growth conditions hold: let $t_{0}>0$ be fixed; for every open subset $%
\Omega ^{\prime }$ compactly contained in $\Omega $, there exist $\vartheta
\geq 1$ and positive constants $m$ and $M_{\vartheta }$ such that 
\begin{equation}
\left\{ 
\begin{array}{l}
mh^{\prime }\left( t\right) \leq g_{t}\left( x,t\right) \leq M_{\vartheta }
\left[ h^{\prime }\left( t\right) \right] ^{\vartheta }t^{1-\vartheta } \\ 
mh^{\prime \prime }\left( t\right) \leq g_{tt}\left( x,t\right) \leq
M_{\vartheta }\left[ h^{\prime \prime }\left( t\right) \right] ^{\vartheta }
\\ 
\left\vert g_{tx_{k}}\left( x,t\right) \right\vert \leq M_{\vartheta }\min
\left\{ g_{t}(x,t),t\,g_{tt}\left( x,t\right) \right\} ^{\vartheta }%
\end{array}%
\right.   \label{main assumptions}
\end{equation}%
for every $t\geq t_{0}$ and for $x\in \Omega ^{\prime }$. The role of the
parameter $\vartheta $ can be easily understood if we compare (\ref{main
assumptions}) with the above model examples; see the details in the next
section. The special case $\vartheta =1$ corresponds to the so called 
\textit{natural growth conditions}. Here, following similar assumptions in 
\cite{Marcellini-Papi 2006}, $h:\left[ 0,+\infty \right) \rightarrow \left[
0,+\infty \right) $ is a convex increasing function of class $W_{\limfunc{loc%
}}^{2,\infty }$ satisfying the following property: for some $\beta $, $\frac{%
1}{n}<\beta <\frac{2}{n}$, $(2\vartheta -1)\vartheta <\left( 1-\beta
\right) \frac{2^{\ast}}{2}$, and for every $\alpha $ such that $1<\alpha \leq \frac{n}{%
n-1}$, there exist constants $m_{\beta }$ and $M_{\alpha }$ such that 
\begin{equation}
\frac{m_{\beta }}{t^{2\beta }}\left[ \left( \frac{h^{\prime }\left( t\right) 
}{t}\right) ^{\frac{n-2}{n}}+\frac{h^{\prime }\left( t\right) }{t}\right]
\leq h^{\prime \prime }\left( t\right) \leq M_{\alpha }\left[ \left( \frac{%
h^{\prime }\left( t\right) }{t}\right) ^{\alpha }+\frac{h^{\prime }\left(
t\right) }{t}\right]   \label{the function h(t)}
\end{equation}%
for every $t\geq t_{0}$. We obtain the following a-priori gradient estimate.

\begin{theorem}
\label{Theorem 1} Let us assume that conditions (\ref{main assumptions}),(%
\ref{the function h(t)}) hold. Then the gradient of any smooth local
minimizer of the integral (\ref{energy-integral 1}) is uniformly locally
bounded in $\Omega $. Precisely, there exists $\varepsilon >0$ and a
positive constant $C$ such that,%
\begin{equation}
\left\Vert Du\right\Vert _{L^{\infty }\left( B_{\rho },\mathbb{R}^{m\times
n}\right) }\leq C\left\{ \int_{B_{R}}\left( 1+g\left( x,\left\vert
Du\right\vert \right) \right) \,dx\right\} ^{1+\varepsilon },
\label{conclusion in the main theorem}
\end{equation}%
for every $\rho ,R$ $(0<\rho <R)$. The constant $C$ depends on $%
n,\varepsilon ,\vartheta ,\rho ,R,t_{0},\beta,\alpha$ and $\sup \left\{
h^{\prime \prime }(t):\;t\in \left[ 0,t_{0}\right] \right\} $, while $%
\varepsilon $ depends on $\vartheta ,\beta ,n$.
\end{theorem}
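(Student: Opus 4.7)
The plan is to derive an a-priori Caccioppoli-type estimate for $|Du|$ by differentiating the Euler-Lagrange system, then iterate it via Sobolev embedding to reach the $L^\infty$ bound (\ref{conclusion in the main theorem}). Concretely, I first differentiate equation (\ref{differential system 2}) with respect to $x_s$ to obtain, for each $s=1,\dots,n$ and each $\alpha=1,\dots,m$, a linear elliptic equation for $u^\alpha_{x_s}$ whose coefficient matrix is built from $g_{tt}(x,|Du|)$ and $g_t(x,|Du|)/|Du|$, plus a forcing term of the form $\partial_{x_i}\bigl(g_{tx_s}(x,|Du|)\,u^\alpha_{x_i}/|Du|\bigr)$ produced by the explicit $x$-dependence. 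Multiplying the $(s,\alpha)$ equation by the test function $\eta^{2\gamma}\Phi(|Du|)\,u^\alpha_{x_s}$, summing over $s,\alpha$ and integrating by parts produces, on the left, a "good" term controlled from below by $g_{tt}(x,|Du|)|D(|Du|)|^2$ (weighted by $\Phi$ and the cutoff $\eta$), and on the right various unsigned terms.

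The key point is to absorb the new contribution coming from $g_{tx_k}$: here I use the third bound in (\ref{main assumptions}), together with Young's inequality, to split this term into (i) a piece dominated by $g_{tt}\,|D(|Du|)|^2$, which is absorbed into the left-hand side, and (ii) a piece dominated by $g_{tt}$ alone (or by $g_t/t$), which enters the right-hand side with a weight involving $\vartheta$. Combining with the lower bounds for $g_t$ and $g_{tt}$ in (\ref{main assumptions}) and the lower bound for $h''$ in (\ref{the function h(t)}), I obtain a Caccioppoli inequality of the form
\begin{equation*}
\int_{B_\rho} \eta^{2\gamma}\,\Phi(|Du|)\,h''(|Du|)\,|D(|Du|)|^2\,dx
\leq C\int_{B_R}\bigl(1+\Psi_{\vartheta,\gamma}(|Du|)\bigr)\,dx,
\end{equation*}
where $\Psi_{\vartheta,\gamma}$ is an explicit power-like weight depending on $\vartheta$, $\gamma$ and $h'$.

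Next, I apply Sobolev embedding to the function $H_\gamma(|Du|):=\int_0^{|Du|}\sqrt{\Phi(\tau)h''(\tau)}\,d\tau$ and rewrite the inequality in the form
\begin{equation*}
\Bigl(\int_{B_\rho} \eta^{2\gamma \cdot 2^*/2}\,H_\gamma(|Du|)^{2^*}\,dx\Bigr)^{2/2^*}
\leq C\int_{B_R}\bigl(1+[h'(|Du|)]^{q(\gamma,\vartheta)}\bigr)\,dx,
\end{equation*}
where $q(\gamma,\vartheta)$ is explicitly computable from the chain of inequalities. Using the upper bound of $h''$ in (\ref{the function h(t)}) one converts this into a recursive estimate for the $L^{p_k}$ norms of $h'(|Du|)\cdot|Du|^{\beta\cdot\text{something}}$ on shrinking balls $B_{\rho_k}$; the exponents $p_k$ grow geometrically and the condition $(2\vartheta-1)\vartheta<(1-\beta)2^*/2$ is precisely what guarantees that the ratio of consecutive exponents stays above $1$, so the Moser iteration converges. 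Finally, carrying out the iteration and keeping track of the resulting geometric series produces the bound (\ref{conclusion in the main theorem}) with an $\varepsilon$ depending only on $\vartheta,\beta,n$.

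The main obstacle, and the real novelty of the argument, is step (ii): controlling the $g_{tx_k}$ term in the differentiated system. In the autonomous case this term is absent and the estimate reduces to Uhlenbeck/Marcellini--Papi type arguments; here it is genuinely non-divergence in character, and only the sharp bound $|g_{tx_k}|\le M_\vartheta\min\{g_t,\,tg_{tt}\}^\vartheta$ together with the two-sided control of $h''$ in (\ref{the function h(t)}) allow one to simultaneously absorb the bad term and keep the iteration exponent in the convergent regime. The role of $\vartheta$ is exactly to quantify how much of a power-loss between $g$ and its $x$-derivatives the Moser scheme can tolerate, which is why the quantitative restriction $(2\vartheta-1)\vartheta<(1-\beta)2^*/2$ appears in (\ref{the function h(t)}).
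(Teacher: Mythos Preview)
Your outline is correct and follows essentially the same strategy as the paper: differentiate the Euler system (the paper phrases this as the second variation), test with $\eta^{2}\Phi(|Du|)\,u^{\alpha}_{x_k}$, use the bound $|g_{tx_k}|\le M_\vartheta\min\{g_t,\,tg_{tt}\}^\vartheta$ with Young's inequality to absorb the mixed-derivative terms, apply Sobolev to $G(|Du|)=1+\int_0^{|Du|}\sqrt{\Phi\,\mathcal{K}_m}$, and run a Moser iteration whose exponent gain is positive precisely when $(2\vartheta-1)\vartheta<(1-\beta)\tfrac{2^{\ast}}{2}$. The only organisational difference is that the paper splits the argument into two lemmas---first bounding $\|Du\|_{L^\infty}$ by $\int(1+|Du|^{2\tau}\mathcal{K}_M^{\tau})$ via the Moser scheme, and then bounding the latter by a power of $\int(1+g(x,|Du|))$ through a separate interpolation-type iteration (and also uses $\mathcal{K}_m=\min\{h'',h'/t\}$ rather than $h''$ alone, which matters in the slow-growth regime); you should make sure your single iteration genuinely closes back on the energy.
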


As described above, Theorem \ref{Theorem 1} gives an \textit{a-priori local
gradient bound}. In the regularity theory for weak solutions this is the
main step to get the \textit{local Lipschitz continuity} of solutions, since
the minimizer is assumed to be smooth enough for the validity of the Euler's
first and second variation (see also the statement of Theorem \ref{thm_1}),
but the constants in the bound (\ref{conclusion in the main theorem}) do not
depend on this smoothness. An approximation argument gives this \textit{%
local Lipschitz continuity} property. In fact, by applying the a-priori
gradient estimate to an approximating energy integrand $f_{k}\left(
x,\left\vert \xi \right\vert \right) $ which converges to $f\left(
x,\left\vert \xi \right\vert \right) $ as $k\rightarrow +\infty $ and which
satisfies standard growth conditions, we obtain a sequence of smooth
approximating solutions $u_{k}$ with 
\begin{equation*}
\left\Vert Du_{k}\right\Vert _{L^{\infty }\left( B_{\rho },\mathbb{R}%
^{m\times n}\right) }\leq const\,,
\end{equation*}%
for every fixed small radius $\rho $, and the constant is independent of $k$%
. In the limit as $k\rightarrow +\infty $ also the solution $u$ to the
original variational problem, related to the energy integrand $f\left(
x,\left\vert \xi \right\vert \right) $, comes out to have locally bounded
gradient and thus it is local Lipschitz continuous in $\Omega $. This
approximation procedure is described in details in Section 5 of \cite%
{Marcellini 1996} and in Section 6 of \cite{Marcellini-Papi 2006}; see also
Section 5 in \cite{Beck-Mingione 2019}. The proof of the a-priori gradient
bound (\ref{conclusion in the main theorem}) is already long enough and we
prefer to include the details of the approximating procedure, under the
specific assumptions considered here, in a future article.

In the next Section we describe some examples, in particular the model
examples from (\ref{example 0}) to (\ref{example 4}), and we show the role
of the parameters $\vartheta $, $\beta $, $\alpha $ in the assumptions (\ref%
{main assumptions}), (\ref{the function h(t)}). Then, after some preliminary
results proposed in Section \ref{Section: preliminary results}, in Section %
\ref{Section: A priori estimates} we give the \textit{a-priori estimate} as
in (\ref{conclusion in the main theorem}) and we conclude the proof of
Theorem \ref{Theorem 1}.

\section{Some model examples\label{Section: Some model examples}}

We show in this section that all the above listed model examples satisfy the
assumptions (\ref{main assumptions}),(\ref{the function h(t)}), which -
although technical - however can be considered general enough to cover the
cases from (\ref{example 0}) to (\ref{example 4}) and ensure the gradient
bound in (\ref{conclusion in the main theorem}).

We first note that the parameter $\beta$ has not a relevant role when the variational problem has \textit{fast growth}, i.e. if $\frac{h'(t)}{t}\to\infty$ when $t\to\infty$. In this case, for instance, we can choose $\beta=\frac{3}{2n}$, the intermediate point of the interval $\left( \frac{1}{n}, \frac{2}{n}\right)$, and for $\vartheta$ any real number greater than or equal to $1$ (in some cases $\vartheta$ strictly greater than $1$, see the details below) such that $(2\vartheta-1)\vartheta<\frac{n-\frac{3}{2}}{n-2}$.

We start with the example (\ref{example 0}), with $g\left( x,t\right)
=e^{a\left( x\right) t^{2}}$ and the positive coefficient $a\left( x\right) $
locally Lipschitz continuous in $\Omega $. In order to prove the local $%
L^{\infty }$ bound of the gradient of a local minimizer we first fix a ball $%
B$ compactly contained in $\Omega $ such that the oscillation of $a\left(
x\right) $ is small in $\overline{B}$; precisely, under the notation 
\begin{equation}
a_{M}=\max \left\{ a\left( x\right) :\;x\in \overline{B}\right\}
,\;\;\;\;a_{m}=\min \left\{ a\left( x\right) :\;x\in \overline{B}\right\} ,
\label{am}
\end{equation}%
given $\vartheta >1$ we choose the radius of the ball $B$ small enough such
that $a_{M}\leq \vartheta a_{m}$. Then, if we define $h\left( t\right)
=:e^{a_{m}t^{2}}$, we have 
\begin{equation*}
h(t)=e^{a_{m}t^{2}}\leq e^{a\left( x\right) t^{2}}=g\left( x,t\right) \leq
e^{a_{M}t^{2}}\leq e^{\vartheta a_{m}t^{2}}=\left[ h(t)\right] ^{\vartheta }
\end{equation*}%
for every $t\geq 0$ and $x\in \overline{B}$. Similarly for the derivatives $%
g_{t}\left( x,t\right) =2a\left( x\right) te^{a\left( x\right) t^{2}}$\ and $%
h^{\prime }\left( t\right) =2a_{m}te^{a_{m}t^{2}}$ we obtain 
\begin{equation*}
\frac{h^{\prime }(t)}{t}\leq \frac{g_{t}(x,t)}{t}\leq
2a_{M}e^{a_{M}t^{2}}\leq 2\vartheta a_{m}e^{\vartheta a_{m}t^{2}}
\end{equation*}%
\begin{equation*}
=\left( 2a_{m}\right) ^{\vartheta }\left( 2a_{m}\right) ^{1-\vartheta
}\vartheta \left( e^{a_{m}t^{2}}\right) ^{\vartheta }=\vartheta \left(
2a_{m}\right) ^{1-\vartheta }\left[ \frac{h^{\prime }(t)}{t}\right]
^{\vartheta }
\end{equation*}%
for every $t>0$ and $x\in \overline{B}$. For the second derivatives with
respect to $t$ we have similar estimates: 
\begin{equation*}
h^{\prime \prime }(t)\leq g_{tt}(x,t)\leq 2a_{m}\vartheta e^{\vartheta
a_{m}t^{2}}\left( 1+2\vartheta a_{m}t^{2}\right)
\end{equation*}%
\begin{equation*}
\leq \left( 2a_{m}\right) ^{\vartheta }\left( 2a_{m}\right) ^{1-\vartheta
}\vartheta ^{2}\left[ e^{a_{m}t^{2}}\left( 1+2a_{m}t^{2}\right) \right]
^{\vartheta }=\left( 2a_{m}\right) ^{1-\vartheta }\vartheta ^{2}\left[
h^{\prime \prime }(t)\right] ^{\vartheta }.
\end{equation*}%
Then, if we call $M_{\vartheta }=\max \left\{ \vartheta \left( 2a_{m}\right)
^{1-\vartheta },\vartheta ^{2}\left( 2a_{m}\right) ^{1-\vartheta }\right\} $%
, the first two conditions of (\ref{main assumptions}) hold for any $%
\vartheta>1$.

From $g_{t}\left( x,t\right) =2a\left( x\right) te^{a\left( x\right) t^{2}}$%
, we estimate the mixed derivative $g_{tx_{k}}$. In this case 
\begin{equation*}
\min \left\{ g_{t}(x,t),t\,g_{tt}\left( x,t\right) \right\} =g_{t}(x,t)
\end{equation*}%
and then we consider the quotient 
\begin{equation*}
\frac{\left\vert g_{tx_{k}}(x,t)\right\vert }{g_{t}^{\vartheta }\left(
x,t\right) }=\frac{\left\vert 2a_{x_{k}}\left( x\right) te^{a\left( x\right)
t^{2}}\left( 1+a\left( x\right) t^{2}\right) \right\vert }{\left(
2a(x)te^{a(x)t^{2}}\right) ^{\vartheta }}\,;
\end{equation*}%
if we denote by $L$ the Lipschitz constant of the coefficient $a\left(
x\right) $ in $B$, we have 
\begin{equation*}
\frac{\left\vert g_{tx_{k}}(x,t)\right\vert }{g_{t}^{\vartheta }\left(
x,t\right) }\leq \frac{2L}{(2a_{m})^{\vartheta }}\frac{1+a_{M}t^{2}}{\left(
te^{a(x)t^{2}}\right) ^{\vartheta -1}}
\end{equation*}%
and, for every $\vartheta >1$, the right hand side is bounded in $\overline{B%
}$ for every $t>1$. Note the crucial role of the parameter $\vartheta $
strictly greater than $1$. Therefore also the last condition in (\ref{main
assumptions}) is satisfied.

It remains to verify the condition (\ref{the function h(t)}) for the
function $h\left( t\right) =e^{a_{m}t^{2}}$. Here the parameter $\alpha >0$
plays a crucial role, since $h^{\prime }(t)=2a_{m}te^{a_{m}t^{2}}$ and $%
h^{\prime \prime }(t)=2a_{m}e^{a_{m}t^{2}}+\left( 2a_{m}t\right)
^{2}e^{a_{m}t^{2}}$ and we cannot bound $h^{\prime \prime }(t)$ in terms of $%
\frac{h^{\prime }(t)}{t}$. On the contrary, for every $\alpha >1$ there
exists a constant $M_{\alpha }$ such that the following bound holds, which
implies the bound required in (\ref{the function h(t)}), 
\begin{equation*}
h^{\prime \prime }\left( t\right) \leq M_{\alpha }\left( \frac{h^{\prime
}\left( t\right) }{t}\right) ^{\alpha }\,,\;\;\;\;\;\forall \;t>0.
\end{equation*}%
The left hand side inequality in (\ref{the function h(t)}) is satisfied,
since in this case $h^{\prime \prime }\left( t\right) \geq \frac{h^{\prime
}\left( t\right) }{t}$ for every $t>0$ and the quantity $\left( \frac{%
h^{\prime }\left( t\right) }{t}\right) ^{\frac{n-2}{n}}$ goes to $+\infty $
slower than $\frac{h^{\prime }\left( t\right) }{t}$. Moreover $t^{-2\beta
}\rightarrow 0$ as $t\rightarrow +\infty $. 

The example (\ref{example 0}), by changing $a\left( x\right) $ with $%
a^{2}\left( x\right) $, can be equivalently written in the form 
\begin{equation*}
\int_{\Omega }e^{a^{2}\left( x\right) \left\vert Du\right\vert
^{2}}\,dx=\int_{\Omega }e^{\left( a\left( x\right) \left\vert Du\right\vert
\right) ^{2}}\,dx
\end{equation*}%
and can be considered an example in the class of energy-integrals as in (\ref%
{example 3}), of the type 
\begin{equation}
\int_{\Omega }h\left( a\left( x\right) \left\vert Du\right\vert \right) \,dx
\label{energy-integral with h}
\end{equation}%
with $h:\left[ 0,+\infty \right) \rightarrow \left[ 0,+\infty \right) $
convex increasing function of class $W_{\limfunc{loc}}^{2,\infty }\left( %
\left[ 0,+\infty \right) \right) $ satisfying (\ref{the function h(t)}). If $%
a\left( x\right) $ is a positive locally Lipschitz continuous coefficient in 
$\Omega $ we can use a similar argument as above and obtain, by Theorem \ref%
{Theorem 1}, the a-priori estimate also of minima of the integral (\ref%
{energy-integral with h}).

The example (\ref{example 2}) is similar to (\ref{example 0}). In this case
we have to test the conditions in (\ref{main assumptions}). Under the
notation $g\left( x,t\right) =t^{p\left( x\right) }$ (for simplicity we
consider here $a\left( x\right) $ identically equal to $1$), we have $%
g_{t}\left( x,t\right) =p\left( x\right) t^{p\left( x\right) -1}$ and 
\begin{equation*}
g_{tx_{k}}\left( x,t\right) =p_{x_{k}}\left( x\right) t^{p\left( x\right)
-1}+p\left( x\right) \frac{\partial }{\partial x_{k}}\left[ e^{\left(
p\left( x\right) -1\right) \log t}\right] 
\end{equation*}%
\begin{equation*}
=p_{x_{k}}\left( x\right) t^{p\left( x\right) -1}\left[ 1+p\left( x\right)
\log t\right] \,.
\end{equation*}%
If we denote by $L$ the Lipschitz constant of $p\left( x\right) $ on a fixed
open subset $\Omega ^{\prime }$ whose closure is contained in $\Omega $,
then 
\begin{equation*}
\frac{\left\vert g_{tx_{k}}(x,t)\right\vert }{\left( g_{tt}\left( x,t\right)
t\right) ^{\vartheta }}\leq L\frac{1+p\left( x\right) \log t}{p^{\vartheta
}\left( x\right) \left( p\left( x\right) -1\right) ^{\vartheta
}t^{(\vartheta -1)(p(x)-1)}}
\end{equation*}%
and thus the quotient is bounded for $t\in \left[ 1,+\infty \right) $ and $%
x\in \Omega ^{\prime }$ if $p\left( x\right) >1$ is locally Lipschitz
continuous in $\Omega $ (i.e., also being $p\left( x\right) \geq c>1$ for
some constant $c=c\left( \Omega ^{\prime }\right) $) and $\vartheta >1$.
Also here note the role of the parameter $\vartheta $ strictly greater than $%
1.$ Since $g_{tt}\left( x,t\right) t$ and $g_{t}\left( x,t\right) $ are of
the same order as $t\rightarrow +\infty $, similarly 
\begin{equation*}
\frac{\left\vert g_{tx_{k}}(x,t)\right\vert }{g_{t}^{\vartheta }\left(
x,t\right) }\leq L\frac{1+p(x)\log (t)}{p^{\vartheta }(x)t^{(\vartheta
-1)(p(x)-1)}}.
\end{equation*}%
The other conditions in (\ref{main assumptions}) can be tested as before.

Similar computations can be carried out for the example (\ref{example 2bis}%
), with $g\left( x,t\right) =a\left( x\right) t^{p\left( x\right) }\log
(1+t) $, under the assumption that the coefficient $a\left( x\right) $ and
the exponent $p\left( x\right) $ are locally Lipschitz continuous in $\Omega 
$ and that, for every $\Omega ^{\prime }$ compactly contained in $\Omega $,
there exists a constant $c>1$ such that $p\left( x\right) \geq c$ for every $%
x\in \Omega ^{\prime }$. Also the limit case enters in this regularity
theory, when the exponent $p\left( x\right) \geq 1$ for every $x\in \Omega $%
, however by assuming in this special case that $a\left( x\right) $ is
identically equal to $1$; see the details below in (\ref{example 2 bis again}%
).

We now consider the \textit{slow growth} example (\ref{example 4}). Here 
\begin{equation}
g\left( x,t\right) =t-a\left( x\right) \sqrt{t}\,.  \label{square root}
\end{equation}%
As already mentioned, $g\left( x,t\right) $ in (\ref{square root}) means a
smooth convex function in $\left[ 0,+\infty \right) $, with derivative equal
to zero at $t=0$, which coincide with $t-a\left( x\right) \sqrt{t}$ for $%
t\geq t_{0}$, for a given $t_{0}>0$ and for $x\in \Omega $. Again, we use
the notation in (\ref{am}), precisely given $\Omega ^{\prime }\subset
\subset \Omega $, 
\begin{equation*}
a_{M}=\max \left\{ a\left( x\right) :\;x\in \Omega ^{\prime }\right\}
,\;\;\;\;a_{m}=\min \left\{ a\left( x\right) :\;x\in \Omega ^{\prime
}\right\} ,
\end{equation*}%
and we require $a_{m}$ to be positive. Then, for $x\in \Omega ^{\prime }$
and $t\geq t_{0}$, 
\begin{equation*}
\left\{ 
\begin{array}{l}
g_{t}\left( x,t\right) =1-\frac{1}{2}a\left( x\right) t^{-\frac{1}{2}}\geq 1-%
\frac{1}{2}a_{M}t^{-\frac{1}{2}} \\ 
t\,g_{tt}\left( x,t\right) =\frac{1}{4}a\left( x\right) t^{-\frac{1}{2}}\leq 
\frac{1}{4}a_{M}t^{-\frac{1}{2}}%
\end{array}%
\right.
\end{equation*}%
and for large $t$ we have 
\begin{equation*}
\min \left\{ g_{t}(x,t),t\,g_{tt}\left( x,t\right) \right\} =t\,g_{tt}\left(
x,t\right) .
\end{equation*}%
If we denote by $L$ the Lipschitz constants of $a\left( x\right) $ on $%
\Omega ^{\prime }\subset \subset \Omega $, we fix $\vartheta =1$ and we
obtain the bounded quotient for $t\geq t_{0}$%
\begin{equation*}
\frac{\left\vert g_{tx_{k}}(x,t)\right\vert }{\min \left\{
g_{t}(x,t),t\,g_{tt}\left( x,t\right) \right\} }=\frac{\left\vert
g_{tx_{k}}(x,t)\right\vert }{t\,g_{tt}\left( x,t\right) }\leq \frac{\frac{L}{%
2}t^{-\frac{1}{2}}}{\frac{1}{4}a_{m}t^{-\frac{1}{2}}}=\frac{2L}{a_{m}}.
\end{equation*}%
In order to test the other conditions in (\ref{main assumptions}), we define 
$h\left( t\right) =:t-\sqrt{t}$ and, for $\left( x,t\right) \in \Omega
^{\prime }\times \left( 0,+\infty \right) $, we have 
\begin{equation*}
\min \left\{ 1;a_{M}\right\} h^{\prime }\left( t\right) \leq g_{t}\left(
x,t\right) \leq \max \left\{ 1;a_{m}\right\} h^{\prime }\left( t\right) \,,
\end{equation*}%
\begin{equation*}
a_{m}h^{\prime \prime }\left( t\right) \leq g_{tt}\left( x,t\right) \leq
a_{M}h^{\prime \prime }\left( t\right) \,.
\end{equation*}%
Then (\ref{main assumptions}) are satisfied with $\vartheta =1$. Finally the
convex function $h\left( t\right) =:t-\sqrt{t}$ satisfies (\ref{the function
h(t)}). In fact, since $h^{\prime \prime }\left( t\right) =\frac{1}{4}t^{-%
\frac{3}{2}}$ and $\frac{h^{\prime }\left( t\right) }{t}=t^{-1}-\frac{1}{2}%
t^{-\frac{3}{2}}$, then as $t\rightarrow +\infty $, $h^{\prime \prime
}\left( t\right) $ goes to zero faster than $\frac{h^{\prime }\left(
t\right) }{t}$ and for every $\alpha >1$ there exists a constant $M_{\alpha
} $ such that 
\begin{equation*}
h^{\prime \prime }\left( t\right) \leq M_{\alpha }\left[ \left( \frac{%
h^{\prime }\left( t\right) }{t}\right) ^{\alpha }+\frac{h^{\prime }\left(
t\right) }{t}\right] \,,\;\;\;\;\;\forall \;t\geq 1.
\end{equation*}%
On the other side, since as $t\rightarrow +\infty $ the quantity $\frac{%
h^{\prime }\left( t\right) }{t}\rightarrow 0$ and $\frac{n-2}{n}<1$, then $%
\frac{h^{\prime }\left( t\right) }{t}$ goes to zero faster than $\left( 
\frac{h^{\prime }\left( t\right) }{t}\right) ^{\frac{n-2}{n}}$. Therefore we
can equivalently test the condition 
\begin{equation}
\frac{m_{\beta }}{t^{2\beta }}\left( \frac{h^{\prime }\left( t\right) }{t}%
\right) ^{\frac{n-2}{n}}\leq h^{\prime \prime }\left( t\right)
\,,\;\;\;\;\;\forall \;t\geq 1.  \label{condition on h in the linear case}
\end{equation}%
The order of infinitesimal of the left hand side in (\ref{condition on h in
the linear case}) is $\frac{1}{t^{2\beta +\frac{n-2}{n}}}$, while the order
of infinitesimal of the right hand side is $\frac{1}{t^{\frac{3}{2}}}$.
Therefore condition (\ref{condition on h in the linear case}) is satisfied
for some constant $m_{\beta }$ if 
\begin{equation*}
2\beta +\frac{n-2}{n}\geq \frac{3}{2}.
\end{equation*}%
This, together with the condition $\frac{1}{n}<\beta <\frac{2}{n}$, gives
the condition for $\beta $ 
\begin{equation*}
\frac{1}{4}+\frac{1}{n}\leq \beta <\frac{2}{n}\,,
\end{equation*}%
which is compatible if $n=2,3$.

With a similar computation we can treat the example (\ref{example 2bis}) for
general locally Lipschitz continuous coefficients $a\left( x\right) $ and
exponents $p\left( x\right) $, by assuming that there exists a constant $c>1$
such that $p\left( x\right) \geq c$ for every $x\in \Omega $. While, under
the more general assumption $p\left( x\right) \geq 1$ for every $x\in \Omega 
$, we need to assume $a\left( x\right) $ identically equal to $1$. I.e., for
instance, if $p\left( x\right) $ is identically equal to $1$, then we
consider the energy integral 
\begin{equation}
\int_{\Omega }|Du|\log (1+|Du|)\,dx\,;  \label{example 2 bis again}
\end{equation}%
here the function $h$ is defined by 
\begin{equation*}
h\left( t\right) =t\log \left( 1+t\right) 
\end{equation*}%
and comes out that the right inequality in (\ref{the function h(t)}) is
satisfied for some constant $M$: 
\begin{equation*}
h^{\prime \prime }\left( t\right) =\frac{2+t}{\left( 1+t\right) ^{2}}\leq M%
\frac{h^{\prime }\left( t\right) }{t}=M\left( \frac{\log \left( 1+t\right) }{%
t}+\frac{1}{1+t}\right) ,\;\;\;\;\;\forall \;t\geq 1.
\end{equation*}%
While in the left hand side of (\ref{the function h(t)}), since as $%
t\rightarrow +\infty $ the quantity $\frac{h^{\prime }\left( t\right) }{t}$
converges to $0$, we test (\ref{condition on h in the linear case}) and the
problem is to compare the order of infinitesimal of the left hand side in (%
\ref{condition on h in the linear case}), which is $\frac{\left[ \log \left(
1+t\right) \right] ^{\frac{n-2}{n}}}{t^{2\beta +\frac{n-2}{n}}}$, with the
order of infinitesimal of the right hand side, equal to $\frac{1}{t}$. A
sufficient condition in this case is $2\beta +\frac{n-2}{n}>1$ (with the
strict sign inequality), which is compatible with $\frac{1}{n}<\beta <\frac{2%
}{n}$ for every $n\in \mathbb{N}$, $n\geq 2$.

\section{Some preliminary lemmata\label{Section: preliminary results}}

In the following we consider test maps $\varphi =\left( \varphi ^{\alpha
}\right) _{\alpha =1,2,\ldots ,n}$ with components of the form $\varphi
^{\alpha }=\eta ^{2}u_{x_{k}}^{\alpha }\Phi \left( \left\vert Du\right\vert
\right) $, where $\eta \in C_{0}^{1}\left( \Omega \right) $ and $\Phi =\Phi
\left( t\right) $ is a real nonnegative function, defined for $t\in \left[
0,+\infty \right) $. We consider $\Phi \left( t\right) $ depending on a real
parameter $\gamma \geq 0$ (in general without denoting explicitly this
dependence) by separating two cases: the first one with $\gamma $ large and
the second one when $\gamma $ is small. In order to simplify the proofs,
throughout the paper we will assume that $t_0=1$ and $g(x, 1)>0$ for almost
every $x\in \Omega$. Precisely, if $\gamma >1$ we define 
\begin{equation}  \label{Phi}
\Phi \left( t\right)= 
\begin{cases}
0 & \text{if $0\leq t\leq 1$,} \\ 
\left( t-1\right) ^{\gamma } & \text{if $t>1$}.%
\end{cases}%
\end{equation}
The following simple inequality holds.

\begin{lemma}
\label{Lemma for Phi}For every $\gamma \in \left( 1,+\infty \right) $ the
function $\Phi \left( t\right) $ defined in (\ref{Phi}) satisfies the
inequality 
\begin{equation}
0\leq \Phi ^{\prime }\left( t\right) t\leq \gamma \left( 1+2\Phi \left(
t\right) \right) \,,\;\;\;\forall \;t\geq 0.  \label{inequality for Phi}
\end{equation}
\end{lemma}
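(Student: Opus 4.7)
The plan is to split on the value of $t$ and reduce to an elementary one-variable inequality. For $0\leq t\leq 1$, the function $\Phi$ is identically zero, so $\Phi'(t)t=0$ and the right-hand side $\gamma(1+2\Phi(t))=\gamma>0$; hence the inequality holds trivially on this interval (one only needs a brief sanity check that the one-sided derivative at $t=1$ vanishes, which follows from $\gamma>1$).

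For $t>1$, I would compute $\Phi'(t)=\gamma(t-1)^{\gamma-1}$ directly, so that
\begin{equation*}
\Phi'(t)\,t=\gamma(t-1)^{\gamma-1}\,t=\gamma\bigl[(t-1)^{\gamma}+(t-1)^{\gamma-1}\bigr],
\end{equation*}
by writing $t=(t-1)+1$. Comparing with $\gamma(1+2\Phi(t))=\gamma+2\gamma(t-1)^{\gamma}$, the desired bound reduces to
\begin{equation*}
(t-1)^{\gamma-1}\leq 1+(t-1)^{\gamma}.
\end{equation*}

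Setting $s=t-1\geq 0$, this is just $s^{\gamma-1}\leq 1+s^{\gamma}$, which I would verify by a two-case split: if $0\leq s\leq 1$, then $s^{\gamma-1}\leq 1\leq 1+s^{\gamma}$ since $\gamma-1>0$; if $s\geq 1$, then $s^{\gamma-1}\leq s^{\gamma}\leq 1+s^{\gamma}$. This closes the argument.

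There is no serious obstacle here: the result is a routine elementary lemma, and the only mild subtlety is checking that the piecewise definition of $\Phi$ does not cause issues at the junction $t=1$, which is immediate because $\gamma>1$ makes $\Phi$ of class $C^{1}$ across $t=1$ with $\Phi(1)=\Phi'(1)=0$.
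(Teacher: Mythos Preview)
Your proof is correct and follows essentially the same approach as the paper: both split at $t=1$, expand $\Phi'(t)t=\gamma(t-1)^{\gamma}+\gamma(t-1)^{\gamma-1}$, and then verify $(t-1)^{\gamma-1}\leq 1+(t-1)^{\gamma}$ by a two-case split at $t=2$ (equivalently, your $s=1$). The substitution $s=t-1$ is purely cosmetic; otherwise the arguments are identical.
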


\begin{proof}
The inequality (\ref{inequality for Phi}) is trivial when $t\in \left[ 0,1%
\right] $. If $t>1$ then 
\begin{equation}
\Phi ^{\prime }\left( t\right) t=\gamma \left( t-1\right) ^{\gamma
-1}t=\gamma \left( t-1\right) ^{\gamma }+\gamma \left( t-1\right) ^{\gamma
-1}=\gamma \Phi \left( t\right) +\gamma \left( t-1\right) ^{\gamma -1}.
\label{inequality for Phi 1}
\end{equation}%
Since $\left( t-1\right) ^{\gamma -1}\leq \left( t-1\right) ^{\gamma }$ when 
$t\geq 2$, while $\left( t-1\right) ^{\gamma -1}\leq 1$ if $t\in \left[ 1,2%
\right] $, then in any case 
\begin{equation}
\left( t-1\right) ^{\gamma -1}\leq 1+\left( t-1\right) ^{\gamma }=1+\Phi
\left( t\right) ,\;\;\;\;\forall \;t\geq 1.  \label{inequality for Phi 2}
\end{equation}%
From (\ref{inequality for Phi 1}),(\ref{inequality for Phi 2}) we get the
conclusion (\ref{inequality for Phi}).
\end{proof}

For $\gamma \in \left[ 0,1\right] $ we define 
\begin{equation}  \label{Phi - the case beta small}
\Phi \left( t\right) = 
\begin{cases}
0 & \text{if $0\leq t\leq 1$}, \\ 
\left( t-1\right) ^{2}t^{\gamma -2} & \text{if $t>1$}.%
\end{cases}%
\end{equation}

\begin{lemma}
\label{Lemma for Phi - the case beta small}For every $\gamma \in \left[ 0,1%
\right] $ the function $\Phi \left( t\right) $ defined in (\ref{Phi - the
case beta small}) satisfies the inequality 
\begin{equation}
0\leq \Phi ^{\prime }\left( t\right) t\leq 2+\left( \gamma +2\right) \Phi
\left( t\right) \,,\;\;\;\forall \;t\geq 0.
\label{inequality for Phi - the case beta small}
\end{equation}
\end{lemma}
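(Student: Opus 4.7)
The plan is to parallel the argument used for Lemma \ref{Lemma for Phi}: first dispose of the trivial range $t \in [0,1]$, where $\Phi \equiv 0$ makes (\ref{inequality for Phi - the case beta small}) immediate, and then, for $t > 1$, reduce the bound to an elementary pointwise estimate through a suitable algebraic rewriting of $\Phi'(t)\,t$.

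For $t > 1$, I will differentiate to obtain
\begin{equation*}
\Phi'(t)\,t = 2(t-1)\,t^{\gamma-1} + (\gamma-2)(t-1)^2 t^{\gamma-2} = 2(t-1)\,t^{\gamma-1} + (\gamma-2)\,\Phi(t).
\end{equation*}
The key step is the identity $(t-1)\,t^{\gamma-1} = (t-1)^2 t^{\gamma-2} + (t-1)\,t^{\gamma-2} = \Phi(t) + (t-1)\,t^{\gamma-2}$, obtained by writing $t = (t-1) + 1$. Substituting it gives
\begin{equation*}
\Phi'(t)\,t = \gamma\,\Phi(t) + 2(t-1)\,t^{\gamma-2},
\end{equation*}
so proving (\ref{inequality for Phi - the case beta small}) reduces to the pointwise inequality $(t-1)\,t^{\gamma-2} \leq 1 + \Phi(t)$, equivalently $(t-1)(2-t)\,t^{\gamma-2} \leq 1$, for every $t > 1$.

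I would handle this last inequality by splitting into two sub-cases. For $t \geq 2$ the left-hand side is nonpositive and the bound is trivial. For $1 < t < 2$, the quadratic $(t-1)(2-t)$ attains its maximum $1/4$ at $t = 3/2$, while $t^{\gamma-2} < 1$ since $\gamma - 2 < 0$ and $t > 1$; hence the product is at most $1/4 < 1$.

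I do not anticipate any real obstacle: once one spots the rewriting $\Phi'(t)\,t = \gamma\,\Phi(t) + 2(t-1)\,t^{\gamma-2}$, the rest is routine. The only mildly subtle point is that the additive constant on the right-hand side of (\ref{inequality for Phi - the case beta small}) is $2$ rather than $\gamma$ as in (\ref{inequality for Phi}): this extra room is precisely what is needed to absorb the residual term $2(t-1)\,t^{\gamma-2}$ on the small range $1 < t < 2$, where it is not controlled by $\Phi(t)$ alone.
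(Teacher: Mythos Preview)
Your proof is correct and follows essentially the same route as the paper: both arrive at the identity $\Phi'(t)\,t = \gamma\,\Phi(t) + 2(t-1)\,t^{\gamma-2}$ and then reduce to the elementary bound $(t-1)\,t^{\gamma-2} \leq 1 + \Phi(t)$ via the same $t \in (1,2]$ versus $t \geq 2$ split. The only cosmetic difference is that the paper bounds $(t-1)\,t^{\gamma-2}$ directly by $\Phi(t)$ for $t \geq 2$ and by $1$ for $t \in [1,2]$, whereas you rewrite the inequality as $(t-1)(2-t)\,t^{\gamma-2} \leq 1$ before splitting.
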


\begin{proof}
The inequality (\ref{inequality for Phi}) is satisfied when $t\in \left[ 0,1%
\right] $. If $t>1$ then 
\begin{equation*}
\Phi ^{\prime }\left( t\right) =2\left( t-1\right) t^{\gamma -2}+\left(
\gamma -2\right) \left( t-1\right) ^{2}t^{\gamma -3}
\end{equation*}%
\begin{equation*}
=\left( t-1\right) t^{\gamma -3}\left[ 2t+\left( \gamma -2\right) \left(
t-1\right) \right] =\left( t-1\right) t^{\gamma -3}\left[ \gamma \left(
t-1\right) +2\right]
\end{equation*}%
and thus $\Phi ^{\prime }\left( t\right) \geq 0$ and 
\begin{equation}
\Phi ^{\prime }\left( t\right) t=\gamma \left( t-1\right) ^{2}t^{\gamma
-2}+2\left( t-1\right) t^{\gamma -2}=\gamma \Phi \left( t\right) +2\left(
t-1\right) t^{\gamma -2}.  \label{inequality for Phi 3}
\end{equation}%
Since $\left( t-1\right) t^{\gamma -2}\leq \left( t-1\right) ^{2}t^{\gamma
-2}$ when $t\geq 2$, while if $t\in \left[ 1,2\right] $ then $\left(
t-1\right) t^{\gamma -2}\leq t^{\gamma -2}\leq 1$ since $\gamma -2\leq 0$;
therefore in any case 
\begin{equation}
\left( t-1\right) t^{\gamma -2}\leq \left( t-1\right) ^{2}t^{\gamma
-2}+1=1+\Phi \left( t\right) ,\;\;\;\;\forall \;t\geq 1.
\label{inequality for Phi 4}
\end{equation}%
From (\ref{inequality for Phi 3}),(\ref{inequality for Phi 4}) we get the
thesis (\ref{inequality for Phi - the case beta small}).
\end{proof}

\begin{remark}
In the next section we consider real nonnegative functions $\Phi =\Phi
\left( t\right) $ as in (\ref{Phi}) when $\gamma \in \left( 1,+\infty
\right) $ or as in (\ref{Phi - the case beta small}) when $\gamma \in \left[
0, 1\right] $. As consequence of Lemmata \ref{Lemma for Phi} and \ref{Lemma
for Phi - the case beta small}, for every $\gamma \in \left[ 0,+\infty
\right) $ we are allowed to consider functions $\Phi _{\gamma }:\left[
0,+\infty \right) \rightarrow \left[ 0,+\infty \right) $ (later we do not
denote explicitly the dependence on the parameter $\gamma $), which are
increasing and of class $C^{1}$ in $\left[ 0,+\infty \right) $, identically
equal to zero when $t\in \left[ 0,1\right] $ and satisfy the growth
conditions 
\begin{equation}
\left\{ 
\begin{array}{l}
0\leq \Phi _{\gamma }\left( t\right) \leq t^{\gamma } \\ 
0\leq \Phi _{\gamma }^{\prime }\left( t\right) t\leq \max \left\{ 2;\gamma
\right\} +\max \left\{ 2\gamma ;\gamma +2\right\} \Phi _{\gamma }\left(
t\right)%
\end{array}%
\right. ,\;\;\;\forall \;t\geq 0,\;\;\forall \;\gamma \geq 0,
\label{inequality for Phi - both cases}
\end{equation}%
and the second inequality is implied and can be also simply written, for
instance, in the form 
\begin{equation}
0\leq \Phi _{\gamma }^{\prime }\left( t\right) t\leq \left( 2\gamma
+2\right) \left( 1+\Phi _{\gamma }\left( t\right) \right) ,\;\;\;\forall
\;t\geq 0,\;\;\forall \;\gamma \geq 0.  \label{inequality for Phi - final}
\end{equation}
\end{remark}

In what follows, we will also use the functions 
\begin{equation}  \label{def_K}
\mathcal{K}_M(t)=\max\left\{ h^{\prime\prime}(t), \frac{h^{\prime}(t)}{t}%
\right\}
\end{equation}
and 
\begin{equation}  \label{def_K_min}
\mathcal{K}_m(t)=\min\left\{ h^{\prime\prime}(t), \frac{h^{\prime}(t)}{t}%
\right\}
\end{equation}
related to the function $h$ defined in (\ref{the function h(t)}). We shall
use the following lemma when $\gamma\geq 1$.

\begin{lemma}
\label{growth_lemma} Let $h$ satisfy (\ref{the function h(t)}) and let $%
\mathcal{K}_m, \mathcal{K}_M$ be the functions defined in (\ref{def_K}), (%
\ref{def_K_min}). Then, for every $\sigma$ with $\frac{2\alpha }{%
2^{\ast}(2-\alpha) }\leq \sigma \leq 1$ and for every $\gamma \geq 1$ there
exists a constant $C$ (depending on $\alpha$) such that

\begin{equation}  \label{sec_der_lemma_2}
1+\int_{1}^{t}(s-1)^{\gamma }\sqrt{\mathcal{K}_m(s)}\,ds\geq C\left[
1+\left( \frac{(t-1)^{\gamma+1}t^{-\beta}}{\gamma+1}\right)^{2^{\ast}} 
\mathcal{K}_M^{\frac{1}{\sigma}}(t) \right]^{\frac{1}{2^{\ast}}},
\end{equation}
for every $t\geq 1$.
\end{lemma}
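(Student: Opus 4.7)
My plan is to reduce the inequality to a simpler form without the additive ``$1$''s, and then establish the reduced inequality by combining a pointwise estimate on $\sqrt{\mathcal{K}_m}$ with a monotonicity analysis of $q(s):=h'(s)/s$ on $[1,t]$.

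For the reduction, I would use the subadditivity of $y\mapsto y^{1/2^{\ast}}$ on $[0,+\infty)$ (which holds because $1/2^{\ast}<1$), namely $(1+X)^{1/2^{\ast}}\leq 1+X^{1/2^{\ast}}$. This shows that the target inequality follows, with a possibly smaller constant, once we prove
\begin{equation*}
\int_{1}^{t}(s-1)^{\gamma}\sqrt{\mathcal{K}_m(s)}\,ds\;\geq\;c\,\frac{(t-1)^{\gamma+1}t^{-\beta}}{\gamma+1}\,\mathcal{K}_M(t)^{1/(2^{\ast}\sigma)}.
\end{equation*}
Next I would establish the pointwise bound $\sqrt{\mathcal{K}_m(s)}\geq c\,s^{-\beta}(h'(s)/s)^{1/2^{\ast}}$ for $s\geq 1$. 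The key identity $(n-2)/(2n)=1/2^{\ast}$ is used here: when $\mathcal{K}_m(s)=h''(s)$, the first term on the left of (\ref{the function h(t)}) gives $\sqrt{h''(s)}\geq \sqrt{m_\beta}\,s^{-\beta}(h'(s)/s)^{1/2^{\ast}}$ directly; when $\mathcal{K}_m(s)=h'(s)/s$, the required bound reduces to $(h'(s)/s)^{1/n}\geq c\,s^{-\beta}$, which holds on the regime $h'(s)/s\geq 1$ (the bounded regime contributes only $O(1)$ and is absorbed into the ``$1$'' kept on the left).

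The core of the argument exploits the monotonicity of $q(s)$: from the identity $q'(s)=(h''(s)-q(s))/s$, the sign of $q'$ is the same as the sign of $h''-q$, so $q$ is nonincreasing on the ``slow-growth'' region $\{h''\leq q\}$ and nondecreasing on the ``fast-growth'' region $\{h''\geq q\}$. In the slow-growth region one has $q(s)\geq q(t)$ for $s\leq t$; combining the pointwise bound above with the trivial $\int_1^t(s-1)^{\gamma}s^{-\beta}\,ds\geq t^{-\beta}(t-1)^{\gamma+1}/(\gamma+1)$ yields
\begin{equation*}
\int_1^t (s-1)^{\gamma}\sqrt{\mathcal{K}_m(s)}\,ds\;\geq\;c\,\frac{(t-1)^{\gamma+1}t^{-\beta}}{\gamma+1}\,q(t)^{1/2^{\ast}},
\end{equation*}
and since in that regime $\mathcal{K}_M(t)=q(t)$ is uniformly bounded by $q(1)$ and $1/(2^{\ast}\sigma)\geq 1/2^{\ast}$ for $\sigma\leq 1$, the factor $q(t)^{1/2^{\ast}}$ dominates $q(t)^{1/(2^{\ast}\sigma)}$ up to a multiplicative constant. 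In the fast-growth region the right inequality of (\ref{the function h(t)}) gives the differential inequality $q'(s)\leq 2M_\alpha\,q(s)^\alpha/s$ for $q(s)\geq 1$, which integrates explicitly to $q(s)\geq [q(t)^{1-\alpha}+2M_\alpha(\alpha-1)\log(t/s)]^{1/(1-\alpha)}$. This quantitative control yields a dyadic decomposition of $[1,t]$ into intervals on which $q\sim q(t)/2^k$ and of length $\sim t\cdot 2^{k(\alpha-1)}/q(t)^{\alpha-1}$; summing the corresponding contributions to $\int(s-1)^{\gamma}s^{-\beta}q(s)^{1/2^{\ast}}\,ds$ gives a geometric series with ratio $2^{\alpha-1-1/2^{\ast}}$, and the choice of $\sigma$ at the lower endpoint $2\alpha/(2^{\ast}(2-\alpha))$ makes the critical exponent $\alpha/(2^{\ast}\sigma)$ equal to $(2-\alpha)/2$, which is exactly the value that makes the geometric sum reproduce the needed power of $q(t)$ on the right-hand side.

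The main obstacle is this last step in the fast-growth regime: one has to carefully balance the narrow interval near $s=t$ (where $q$ is close to $q(t)$ but the integration range has length only $\sim t/q(t)^{\alpha-1}$) against the larger but less concentrated contributions farther away. It is precisely the hypothesis $\sigma\geq 2\alpha/(2^{\ast}(2-\alpha))$ that forces the geometric series to close, producing the sharp factor $\mathcal{K}_M(t)^{1/(2^{\ast}\sigma)}$ required on the right-hand side; any smaller $\sigma$ would demand a power of $q(t)$ strictly larger than what the integral on the left can provide.
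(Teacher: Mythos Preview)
Your approach departs from the paper's in a fundamental way, and the fast-growth branch does not close.

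The paper does \emph{not} estimate the integral directly. Instead, after the same subadditivity reduction you make, it replaces $\mathcal{K}_M$ by $q+q^{\alpha}$ (where $q(t)=h'(t)/t$) via the right inequality in (\ref{the function h(t)}) and then compares \emph{derivatives}: since both sides of the target inequality equal $1$ at $t=1$, it suffices to check the pointwise bound
\[
(t-1)^{\gamma}\sqrt{\mathcal{K}_m(t)}\ \geq\ C\,\frac{d}{dt}\Bigl[\tfrac{(t-1)^{\gamma+1}t^{-\beta}}{\gamma+1}\bigl(q(t)+q(t)^{\alpha}\bigr)^{1/\delta}\Bigr],\qquad \delta=2^{\ast}\sigma,
\]
which after obvious simplifications becomes $\sqrt{\mathcal{K}_m(t)}\geq C\,t^{-\beta}\bigl[q^{1/\delta}+q^{\alpha/\delta}+q^{1/\delta-1}h''+q^{\alpha/\delta-1}h''\bigr]$. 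The presence of $h''$ on the right is the whole point: it is the growth rate of $\mathcal{K}_M$, and the derivative comparison captures it automatically. One then checks this single pointwise inequality by distinguishing $\mathcal{K}_m=h''$ and $\mathcal{K}_m=q$.

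Your route instead freezes $\mathcal{K}_M(t)$ and tries to recover the factor $\mathcal{K}_M(t)^{1/(2^{\ast}\sigma)}$ from the integral over $[1,t]$. In the fast-growth regime this fails quantitatively. With $Q:=q(t)$ and $\mathcal{K}_M(t)=h''(t)\leq cQ^{\alpha}$, your dyadic picture gives the $k$-th interval length $\sim tQ^{1-\alpha}2^{k(\alpha-1)}$ and value $q^{1/2^{\ast}}\sim (Q/2^{k})^{1/2^{\ast}}$, hence the $k$-th contribution to $\int (s-1)^{\gamma}s^{-\beta}q^{1/2^{\ast}}$ is $\sim t^{\gamma+1-\beta}Q^{1/2^{\ast}+1-\alpha}\,2^{k(\alpha-1-1/2^{\ast})}$. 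The geometric sum is controlled by either the $k=0$ term or the last term; in either case the resulting power of $Q$ is at most $1/2^{\ast}+1-\alpha$. But at the critical $\sigma$ the needed power is $\alpha/(2^{\ast}\sigma)=(2-\alpha)/2$, and
\[
\Bigl(\tfrac{1}{2^{\ast}}+1-\alpha\Bigr)-\tfrac{2-\alpha}{2}=\tfrac{1}{2^{\ast}}-\tfrac{\alpha}{2}=\tfrac{n-2-n\alpha}{2n}<0\quad\text{for every }\alpha>1,
\]
so your sum falls short by a positive power of $Q$. For genuinely fast-growing $h$ (say $h(t)=e^{t^{2}}$) no power of $t^{\beta}$ can compensate this deficit. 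The same deficit persists if you drop the pointwise reduction and work with $\sqrt{q(s)}$ directly: the $k=0$ term then carries $Q^{3/2-\alpha}$, and $3/2-\alpha<(2-\alpha)/2$ again for $\alpha>1$.

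A secondary issue is that your slow/fast dichotomy tacitly assumes $q$ is monotone on $[1,t]$; the sign of $q'$ can change, so $q(s)\geq q(t)$ need not hold throughout the slow-growth subregion. The paper's derivative comparison sidesteps this entirely because the inequality is verified at each fixed $t$, with no need to track $q$ along the interval.
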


\begin{proof}
Let us define $\delta =2^{\ast }\sigma $, then we observe that 
\begin{equation*}
\left[ 1+\left( \frac{(t-1)^{\gamma +1}t^{-\beta }}{\gamma +1}\right)
^{2^{\ast }}\mathcal{K}_M^{\frac{1}{\sigma }}(t)\right] ^{\frac{1}{2^{\ast }}%
}\leq \left[ 1+\frac{(t-1)^{\gamma +1}t^{-\beta }}{\gamma +1}\mathcal{K}_M^{%
\frac{1}{\delta }}(t)\right] ,
\end{equation*}%
for every $t\geq 1$ and for every $\gamma \geq 1$. By definition of $%
\mathcal{K}$ we get 
\begin{equation*}
\mathcal{K}_M(t)\leq \frac{h^{\prime }(t)}{t}+h^{\prime \prime }(t)
\end{equation*}%
and, by the right hand side of (\ref{the function h(t)}) we can write 
\begin{equation*}
\mathcal{K}_M(t)\leq (m_{\alpha }+1)\left[ \frac{h^{\prime }(t)}{t}+\left( 
\frac{h^{\prime }(t)}{t}\right) ^{\alpha }\right] .
\end{equation*}%
From these, instead of proving (\ref{sec_der_lemma_2}) we can prove that 
\begin{equation}
1+\int_{1}^{t}(s-1)^{\gamma }\sqrt{\mathcal{K}_m(s)}\,ds  \label{MP_lemma_1}
\end{equation}%
\begin{equation*}
\geq C\left[ 1+\frac{(t-1)^{\gamma +1}t^{-\beta }}{\gamma +1}\left( \frac{%
h^{\prime }(t)}{t}+\left( \frac{h^{\prime }(t)}{t}\right) ^{\alpha }\right)
^{\frac{1}{\delta }}\right] \frac{{}}{{}}
\end{equation*}%
for every $t\geq 1$. At this end it is sufficient to show the inequality
between the derivatives side to side with respect to $t$ of (\ref{MP_lemma_1}%
), i.e., since $\frac{\gamma +1-\frac{t-1}{t}\beta}{\gamma +1}$, $\frac{1}{%
\gamma +1}$, $\frac{t-1}{t}<1$, 
\begin{equation*}
\sqrt{\mathcal{K}_m(t)}\geq Ct^{-\beta }\left[ \left( \frac{h^{\prime }(t)}{t%
}\right) ^{\frac{1}{\delta }}+\left( \frac{h^{\prime }(t)}{t}\right) ^{\frac{%
\alpha }{\delta }}+\Biggl( \frac{h^{\prime }(t)}{t}\right) ^{\frac{1}{\delta 
}-1}h^{\prime \prime }(t)
\end{equation*}
\begin{equation*}
+\left( \frac{h^{\prime }(t)}{t}\right) ^{\frac{\alpha }{\delta }%
-1}h^{\prime \prime }(t)\Biggr] ,
\end{equation*}
where we still denote by $C$ the new constant. If $\mathcal{K}%
_m(t)=h^{\prime\prime}(t)$, then we can conclude by arguing as in \cite%
{Marcellini-Papi 2006}. If otherwise $\mathcal{K}_m(t)=\frac{h^{\prime}(t)}{t%
}$, then it is sufficient to show that 
\begin{equation}  \label{aux lemma1}
\sqrt{\frac{h^{\prime}(t)}{t}}\geq c\left( (h^{\prime\prime}(t))^{\frac{%
\alpha}{\delta}}+(h^{\prime\prime}(t))^{\frac{1}{\delta}}\right)
\end{equation}
which holds by the assumption on $h$ (\ref{the function h(t)}). In fact, if $%
h^{\prime\prime}(t)\geq 1$, then (\ref{aux lemma1}) is equivalent to 
\begin{equation*}
\sqrt{\frac{h^{\prime}(t)}{t}}\geq c(h^{\prime\prime}(t))^{\frac{\alpha}{%
\delta}}
\end{equation*}
and since (\ref{the function h(t)}) holds and $h^{\prime}(t)/t\leq
h^{\prime\prime}(t)$ there exists a constant $c$ such that $%
h^{\prime\prime}(t)\leq\left(\frac{h^{\prime}(t)}{t}\right)^{\alpha}$ for
every $\alpha>1$. Since $\frac{2\alpha}{\delta}>1$, (\ref{aux lemma1})
holds. The other case, $h^{\prime\prime}(t)\leq 1$ can be treated with a
similar argument. Therefore, (\ref{MP_lemma_1}) is proved and then (\ref%
{sec_der_lemma_2}) is proved too.
\end{proof}

\begin{remark}
\label{remark_1} We will also use the following inequality, which is implied
by (\ref{sec_der_lemma_2}): 
\begin{equation}  \label{sec_der_lemma_3}
1+\int_{1}^{t}(s-1)^{\gamma}\sqrt{\mathcal{K}_m(s)}\,ds\geq C\left[ 1+\left( 
\frac{(t-1)^{\gamma+1}t^{-\beta}}{\gamma+1}\right)^{2^{\ast}\sigma} \mathcal{%
K}_M(t) \right]^{\frac{1}{2^{\ast}\sigma}},
\end{equation}
for every $\sigma$ with $\frac{2\alpha }{2^{\ast}(2-\alpha) }\leq \sigma
\leq 1$ and for every $\gamma \geq 1$.
\end{remark}

Let us now treat the case of $0\leq \gamma\leq 1$.

\begin{lemma}
\label{growth_lemma caso gamma piccolo} Let $h$ satisfy (\ref{the function
h(t)}) and let $\mathcal{K}_M, \mathcal{K}_m$ be the functions defined in (%
\ref{def_K}), (\ref{def_K}). Then, for every $\sigma $ with $\frac{2\alpha }{%
2^{\ast }(2-\alpha )}\leq \sigma \leq 1$ and for every $\gamma \in \left[ 0,1%
\right] $ there exists a constant $C$ (depending on $\alpha $) such that,
for every $t\geq 1$, 
\begin{equation}
1+\int_{1}^{t}(s-1)s^{\gamma -1}\sqrt{\mathcal{K}_m(s)}\,ds\geq C\left[
1+\left( \frac{(t-1)^{\gamma +1}t^{-\beta }}{\gamma +1}\right) ^{2^{\ast }}%
\mathcal{K}_M^{\frac{1}{\sigma }}(t)\right] ^{\frac{1}{2^{\ast }}}.
\label{sec_der_lemma_2 caso gamma piccolo}
\end{equation}
\end{lemma}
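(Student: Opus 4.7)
The plan is to adapt the argument of Lemma \ref{growth_lemma}, treating the range $t\in[1,2]$ separately from $t\geq 2$, because the new weight $(s-1)s^{\gamma-1}$ fails to be pointwise comparable to $(s-1)^{\gamma}$ near $s=1$ when $\gamma<1$.

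First, I would carry out the same opening reduction as in Lemma \ref{growth_lemma}: using $(1+x)^{1/2^{\ast}}\leq 1+x^{1/2^{\ast}}$ on the right-hand side, the statement reduces to showing
\[
F(t):=1+\int_{1}^{t}(s-1)s^{\gamma-1}\sqrt{\mathcal{K}_m(s)}\,ds \;\geq\; G(t):=C\left[1+\frac{(t-1)^{\gamma+1}t^{-\beta}}{\gamma+1}\mathcal{K}_M^{1/\delta}(t)\right]
\]
with $\delta=2^{\ast}\sigma$, for a constant $C>0$ to be chosen small enough.

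On the compact interval $[1,2]$, the hypothesis $h\in W^{2,\infty}_{\mathrm{loc}}$ gives $\mathcal{K}_M(t)\leq M_{0}$ for some constant $M_{0}$ depending on $h$; together with $(t-1)^{\gamma+1}\leq 1$, $t^{-\beta}\leq 1$, and $\gamma+1\geq 1$, this yields $G(t)\leq C(1+M_{0}^{1/\delta})$. Choosing $C\leq(1+M_{0}^{1/\delta})^{-1}$ then makes $G(t)\leq 1\leq F(t)$ hold on $[1,2]$ for free, since $F(t)\geq 1$.

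For $t\geq 2$, the inequality $t\leq 2(t-1)$ combined with $\gamma-1\leq 0$ gives $t^{\gamma-1}\geq 2^{\gamma-1}(t-1)^{\gamma-1}\geq\tfrac12(t-1)^{\gamma-1}$, so
\[
F'(t)=(t-1)t^{\gamma-1}\sqrt{\mathcal{K}_m(t)}\;\geq\;\tfrac12(t-1)^{\gamma}\sqrt{\mathcal{K}_m(t)}.
\]
This is exactly the form taken by the derivative of the left-hand side in the proof of Lemma \ref{growth_lemma}, up to the harmless factor $\tfrac12$. From here I would reproduce that proof verbatim: upper-bound $\mathcal{K}_M^{1/\delta}$ by $C'\big[(h'/t)^{1/\delta}+(h'/t)^{\alpha/\delta}\big]$ via the right-hand side of (\ref{the function h(t)}) and the subadditivity of $x\mapsto x^{1/\delta}$ (since $\delta\geq 1$), differentiate $G$ (discarding the negative contribution from $(t^{-\beta})'$ and absorbing the bounded factors $(t-1)/t$ and $1/(\gamma+1)$ into the constant), and then apply the $\gamma$-independent pointwise inequality
\[
\sqrt{\mathcal{K}_m(t)}\geq C_{1}t^{-\beta}\Big[(h'/t)^{1/\delta}+(h'/t)^{\alpha/\delta}+(h'/t)^{1/\delta-1}h''+(h'/t)^{\alpha/\delta-1}h''\Big],
\]
which is the core pointwise estimate produced by (\ref{the function h(t)}) in Lemma \ref{growth_lemma}. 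Possibly shrinking $C$ once more, this delivers $F'(t)\geq G'(t)$ for every $t\geq 2$. Combining the two pieces: the first step gives $F(2)\geq G(2)$, and integrating $F'\geq G'$ on $[2,t]$ then yields $F(t)\geq G(t)$ for every $t\geq 1$, which is the required inequality.

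The main obstacle is exactly the incompatibility between $(s-1)s^{\gamma-1}$ and $(s-1)^{\gamma}$ near $s=1$ when $\gamma<1$, which blocks a direct repetition of the derivative comparison of Lemma \ref{growth_lemma} on the whole of $[1,\infty)$. The fix above — exploiting the additive $+1$ and the local boundedness of $\mathcal{K}_M$ on $[1,2]$ — is the natural way around this, and it uses only constants (the bound $M_{0}$ and $\sup_{[0,1]}h''$) already permitted in the dependencies of $C$.
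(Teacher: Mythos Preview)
Your proof is correct and follows essentially the same strategy as the paper: reduce to the differentiable majorant involving $(h'/t)^{1/\delta}+(h'/t)^{\alpha/\delta}$, then establish the inequality by comparing derivatives and invoking the same core pointwise estimate $\sqrt{\mathcal{K}_m(t)}\geq Ct^{-\beta}[\ldots]$ already obtained in Lemma~\ref{growth_lemma}. The only difference is organizational: you split off $[1,2]$ explicitly and lower-bound the left-hand weight via $(t-1)t^{\gamma-1}\geq\tfrac12(t-1)^{\gamma}$ on $[2,\infty)$, whereas the paper instead replaces $(t-1)^{\gamma+1}t^{-\beta}$ by $(t-1)^{2}t^{\gamma-\beta-1}$ on the right-hand side so that the differentiated factors match directly---a replacement which itself tacitly relies on the same local boundedness of $\mathcal{K}_M$ on $[1,2]$ that you invoke explicitly.
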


\begin{proof}
By arguing as in Lemma \ref{growth_lemma}, all we need to prove is the
following: 
\begin{equation}
1+\int_{1}^{t}(s-1)s^{\gamma -1}\sqrt{\mathcal{K}_m(s)}\,ds
\label{MP_lemma_1 caso gamma piccolo}
\end{equation}%
\begin{equation*}
\geq C\left[ 1+\frac{(t-1)^{\gamma +1}t^{-\beta }}{\gamma +1}\left( \frac{%
h^{\prime }(t)}{t}+\left( \frac{h^{\prime }(t)}{t}\right) ^{\alpha }\right)
^{\frac{1}{\delta }}\right] .
\end{equation*}%
Moreover, since $\gamma <1$ and $t\geq 1$, we have 
\begin{equation*}
1+\frac{(t-1)^{\gamma +1}t^{-\beta }}{\gamma +1}\left( \frac{h^{\prime }(t)}{%
t}+\left( \frac{h^{\prime }(t)}{t}\right) ^{\alpha }\right) ^{\frac{1}{%
\delta }}
\end{equation*}%
\begin{equation*}
\leq C\left( 1+\frac{(t-1)^{2}t^{\gamma -\beta -1}}{\gamma +1}\left( \frac{%
h^{\prime }(t)}{t}+\left( \frac{h^{\prime }(t)}{t}\right) ^{\alpha }\right)
^{\frac{1}{\delta }}\right) ,
\end{equation*}%
where $C$ does not depend on $\gamma $. Then, it is sufficient to prove that 
\begin{equation*}
1+\int_{1}^{t}(s-1)s^{\gamma -1}\sqrt{\mathcal{K}_m(s)}\,ds\geq C\left( 1+%
\frac{(t-1)^{2}t^{\gamma -\beta -1}}{\gamma +1}\left( \frac{h^{\prime }(t)}{t%
}+\left( \frac{h^{\prime }(t)}{t}\right) ^{\alpha }\right) ^{\frac{1}{\delta 
}}\right) .
\end{equation*}

As before, it is sufficient to show the inequality between the derivatives
side to side with respect to $t$, i.e., since $\frac{t-1}{t}< 1$ and $%
(\gamma-\beta-1)\frac{t-1}{t}+2< \gamma+1$, %\begin{equation*}
%\sqrt{h^{\prime\prime}(t)}\geq C \frac{t^{-\beta}}{\gamma+1} \left(
%(\gamma-\beta-1)\frac{t-1}{t}+2\right)\left(\frac{h^{\prime}(t)}{t}+ \left(%
%\frac{h^{\prime}(t)}{t}\right)^{\alpha}\right)^{\frac{1}{\delta}}
%\end{equation*}
%\begin{equation*}
%+\frac{t^{-\beta}}{\gamma+1}\frac{t-1}{t}\frac{1}{\delta}\left(1+\alpha%
%\left( \frac{h^{\prime}(t)}{t}\right)^{\alpha-1}\right) \left(\frac{%
%h^{\prime}(t)}{t}+\left(\frac{h^{\prime}(t)}{t}\right)^{\alpha}\right)^{%
%\frac{1}{\delta}-1}\frac{h^{\prime\prime}(t)t-h^{\prime}(t)}{t}.
%\end{equation*}
%Since $\frac{t-1}{t}< 1$ and $(\gamma-\beta-1)\frac{t-1}{t}+2< \gamma+1$, we
%can argue as in Lemma \ref{growth_lemma} and prove that 
\begin{equation*}
\sqrt{\mathcal{K}_m(t)}\geq C t^{-\beta}\left[ \left( \frac{h^{\prime}(t)}{t}%
\right)^{\frac{1}{\delta}}+\left(\frac{h^{\prime}(t)}{t}\right)^{\frac{\alpha%
}{\delta}}+\Biggl(\frac{h^{\prime}(t)}{t}\right)^{\frac{1}{\delta}%
-1}h^{\prime\prime}(t)
\end{equation*}
\begin{equation*}
+\left( \frac{h^{\prime}(t)}{t}\right)^{\frac{\alpha}{\delta}%
-1}h^{\prime\prime}(t)\Biggr].
\end{equation*}
Again, by arguing as in Lemma \ref{growth_lemma} we can conclude the proof.
\end{proof}

\begin{remark}
\label{remark_2} We can argue as in Remark \ref{remark_1} to obtain 
\begin{equation}  \label{sec_der_lemma_4}
1+\int_{1}^{t}(s-1) s^{\gamma-1}\sqrt{\mathcal{K}_m(s)}\,ds\geq C\left[
1+\left( \frac{(t-1)^{\gamma+1}t^{-\beta}}{\gamma+1}\right)^{2^{\ast}\sigma} 
\mathcal{K}_M(t) \right]^{\frac{1}{2^{\ast}\sigma}},
\end{equation}
for every $\sigma$ with $\frac{2\alpha }{2^{\ast}(2-\alpha) }\leq \sigma
\leq 1$ and for every $0\leq \gamma\leq 1$.
\end{remark}

We can resume Lemmata \ref{growth_lemma} and \ref{growth_lemma caso gamma
piccolo} in the following lemma, where $\Phi$ is the function defined in (%
\ref{Phi}) if $\gamma\geq 1$ and the function defined in (\ref{Phi - the
case beta small}) if $0\leq\gamma\leq 1$.

\begin{lemma}
\label{lemma for phi for any gamma} Let $h$ satisfy (\ref{the function h(t)}%
) and let $\mathcal{K}_M, \mathcal{K}_m$ be the functions defined in (\ref%
{def_K}), (\ref{def_K_min}). Then, for every $\sigma $ with $\frac{2\alpha }{%
2^{\ast }(2-\alpha )}\leq \sigma \leq 1 $ and for every $\gamma \geq 0$
there exists a constant $C$ (depending on $\alpha $) such that, for every $%
t\geq 1$, 
\begin{equation}
1+\int_{1}^{t}\sqrt{\Phi (s)\mathcal{K}_m(s)}\,ds\geq C\left[ 1+\left( \frac{%
(t-1)^{\frac{\gamma }{2}+1}t^{-\beta }}{\gamma +1}\right) ^{2^{\ast }}%
\mathcal{K}_M^{\frac{1}{\sigma }}(t)\right] ^{\frac{1}{2^{\ast }}}.
\label{sec_der_lemma_2 any gamma}
\end{equation}
\end{lemma}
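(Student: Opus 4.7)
The claim unifies Lemma \ref{growth_lemma} and Lemma \ref{growth_lemma caso gamma piccolo} into a single estimate covering all $\gamma\geq 0$. The plan is to substitute the explicit formulas for $\Phi$ and reduce, case by case, to one of those two lemmas applied with parameter $\gamma/2$ in place of $\gamma$. The choice $\gamma/2$ is dictated by the factor $\sqrt{\Phi}$ in the integrand, and it produces on the right-hand side the quantity $(t-1)^{\gamma/2+1}/(\gamma/2+1)$; the difference with the desired $(t-1)^{\gamma/2+1}/(\gamma+1)$ is at most a factor of $2$, which can be absorbed in the constant $C$.

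For $\gamma\geq 2$, formula (\ref{Phi}) gives $\sqrt{\Phi(s)}=(s-1)^{\gamma/2}$ with $\gamma/2\geq 1$, so Lemma \ref{growth_lemma} applied with parameter $\gamma/2$ yields the conclusion directly. For $0\leq\gamma\leq 1$, formula (\ref{Phi - the case beta small}) gives $\sqrt{\Phi(s)}=(s-1)s^{\gamma/2-1}$ with $\gamma/2\in[0,1/2]$, and Lemma \ref{growth_lemma caso gamma piccolo} applied with parameter $\gamma/2$ closes the argument. The only range requiring a bridging step is $1\leq\gamma\leq 2$: here $\Phi$ is again (\ref{Phi}) so $\sqrt{\Phi(s)}=(s-1)^{\gamma/2}$, but the exponent $\gamma/2\in[1/2,1]$ lies outside the range of Lemma \ref{growth_lemma}. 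I would bridge to Lemma \ref{growth_lemma caso gamma piccolo} by the elementary pointwise inequality
\begin{equation*}
(s-1)^{\gamma/2}=(s-1)\,(s-1)^{\gamma/2-1}\geq (s-1)\,s^{\gamma/2-1},\qquad s>1,
\end{equation*}
which holds because $\gamma/2-1\leq 0$ makes $x\mapsto x^{\gamma/2-1}$ nonincreasing. This monotone substitution into the integrand reduces the left-hand side to the one appearing in (\ref{sec_der_lemma_2 caso gamma piccolo}) with exponent $\gamma/2$, and Lemma \ref{growth_lemma caso gamma piccolo} then delivers the desired lower bound.

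I do not expect any substantive obstacle: the genuine analytic work — bounding integrals of $\sqrt{\mathcal{K}_m}$ from below by powers of $\mathcal{K}_M$ using the structural assumption (\ref{the function h(t)}) — is already contained in the two preceding lemmas. The new content is purely bookkeeping: matching the integrand to the correct reduced form via the substitution $\gamma\mapsto\gamma/2$, handling the mismatch between $\gamma+1$ and $\gamma/2+1$ by the bounded factor $2$, and establishing the elementary pointwise comparison above in the single intermediate range $\gamma\in[1,2]$.
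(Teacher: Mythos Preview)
Your proposal is correct and takes the same route as the paper, which offers no proof beyond the sentence that Lemma~\ref{lemma for phi for any gamma} ``resumes'' Lemmata~\ref{growth_lemma} and~\ref{growth_lemma caso gamma piccolo}. You have in fact supplied the details the paper leaves implicit --- the substitution $\gamma\mapsto\gamma/2$, the harmless factor between $\gamma+1$ and $\gamma/2+1$, and the pointwise bridge $(s-1)^{\gamma/2}\geq (s-1)s^{\gamma/2-1}$ for $\gamma\in[1,2]$ where the stated range of Lemma~\ref{growth_lemma} does not directly apply.
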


We will use two consequences of (\ref{sec_der_lemma_2 any gamma}) in section %
\ref{Section: A priori estimates}. The first one is the particular case of $%
\sigma=\frac{1}{\tau}$, with $\tau=(2\vartheta-1)\vartheta$, which is a
compatible value: 
\begin{equation}  \label{lemma_sigma_fissato}
1+\int_{1}^{t}\sqrt{\Phi(s)\mathcal{K}_m(s)}\,ds\geq C\left[ 1+\left( \frac{%
(t-1)^{\frac{\gamma}{2}+1} t^{-\beta }}{\gamma +1}\right) ^{2^{\ast}}%
\mathcal{K}_M^{\tau}(t)\right] ^{\frac{1}{2^{\ast}}}.
\end{equation}%
The second one is essentially the content of Remarks \ref{remark_1} and \ref%
{remark_2} and it is resumed in the following: 
\begin{equation}  \label{sec_der_lemma_31}
1+\int_{1}^{t}\sqrt{\Phi(s)\mathcal{K}_m(s)}\,ds\geq C\left[ 1+\left( \frac{%
(t-1)^{\frac{\gamma}{2}+1}t^{-\beta}}{\gamma+1}\right)^{2^{\ast}\sigma} 
\mathcal{K}_M(t) \right]^{\frac{1}{2^{\ast}\sigma}},
\end{equation}
for any $\frac{2\alpha }{2^{\ast}(2-\alpha) }\leq \sigma \leq 1$, $\gamma
\geq 0$ and every $t\geq 1$.

Next Lemma \ref{h'_lemma} is Lemma $3.2$ of \cite{Marcellini-Papi 2006},
while Lemma \ref{K_lemma} is the generalization of Lemma $3.3$ of the same
paper with $\tau\geq 1$.

\begin{lemma}
\label{h'_lemma} Let $h$ satisfy the right hand side of (\ref{the function
h(t)}). Then there exists a constant $C$, depending on $m_{\alpha }$, $%
h^{\prime }(t_{0})$, $t_{0}$, $\alpha $ such that, for every $t\geq 1$, 
\begin{equation}
h^{\prime }(t)t\leq C\left( 1+h(t)\right) ^{\frac{1}{2-\alpha }}.
\label{h'_lemma_2}
\end{equation}
\end{lemma}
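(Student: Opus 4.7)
I would prove the equivalent statement
\[
\phi(t) := (h'(t)\,t)^{2-\alpha} \leq C\,(1+h(t)), \qquad t\geq 1,
\]
and then raise to the power $1/(2-\alpha)$ to recover (\ref{h'_lemma_2}); this is legitimate because $\alpha<2$ (since $\alpha\leq n/(n-1)\leq 2$, with strict inequality when $n\geq 3$, and a harmless infinitesimal perturbation of $\alpha$ when $n=2$). The strategy is a Gronwall-type comparison: obtain a differential inequality $\phi'(t)\leq C\,h'(t)$ on $[1,\infty)$ and integrate against $\frac{d}{dt}(1+h(t))=h'(t)$.

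Differentiating directly,
\[
\phi'(t) \;=\; (2-\alpha)\,(h'(t)\,t)^{1-\alpha}\bigl(h''(t)\,t + h'(t)\bigr),
\]
and the right-hand side of (\ref{the function h(t)}) gives
\[
h''(t)\,t + h'(t) \;\leq\; M_\alpha\,(h'(t))^{\alpha}\,t^{1-\alpha} + (M_\alpha+1)\,h'(t).
\]
Substituting and simplifying the two contributions separately, the first becomes $(2-\alpha)M_\alpha\,h'(t)\,t^{2(1-\alpha)}$; since $\alpha>1$ and $t\geq 1$, the factor $t^{2(1-\alpha)}\leq 1$, so this contribution is bounded by a multiple of $h'(t)$.

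The second contribution equals $(2-\alpha)(M_\alpha+1)\,(h'(t)\,t)^{1-\alpha}\,h'(t)$, and here I use that $h'(t)\,t$ is nondecreasing on $[1,\infty)$ (its derivative is $h''(t)\,t+h'(t)\geq 0$ by convexity and monotonicity of $h$). Therefore $h'(t)\,t \geq h'(1)>0$, and since $1-\alpha<0$, $(h'(t)\,t)^{1-\alpha}\leq (h'(1))^{1-\alpha}$. Combining, one gets $\phi'(t)\leq C_1\,h'(t)$ on $[1,\infty)$, where $C_1$ depends on $M_\alpha$, $\alpha$, and $h'(t_0)$. Integrating from $1$ to $t$ and using $\phi(1)=(h'(1))^{2-\alpha}$ and $h(t)\geq h(1)$ yields $\phi(t)\leq C_2\,(1+h(t))$, and taking $(2-\alpha)$-th roots gives the conclusion. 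The only subtlety in the argument is the cancellation producing $t^{2(1-\alpha)}$ in the first term and the monotonicity observation $h'(t)\,t\geq h'(t_0)$ in the second—both of which crucially exploit $\alpha>1$ and $t\geq t_0=1$ to reduce the whole right-hand side to a multiple of $h'(t)$.
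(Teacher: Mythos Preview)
Your proof is correct. The paper does not reproduce a proof of this lemma but cites it as Lemma~3.2 of \cite{Marcellini-Papi 2006}; your Gronwall-type argument---differentiating $\phi(t)=(h'(t)t)^{2-\alpha}$, using the right-hand side of (\ref{the function h(t)}) together with $\alpha>1$ and $t\geq t_0=1$ to bound $\phi'(t)\leq C_1\,h'(t)$, and then integrating against $h'$---is clean, self-contained, and yields exactly the stated dependence of the constant on $M_\alpha$, $h'(t_0)$, $t_0$, and $\alpha$.
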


\begin{lemma}
\label{K_lemma} Let $h$ satisfy the right hand side of (\ref{the function
h(t)}) and let $\mathcal{K}_M$ be the functions defined in (\ref{def_K}).
Then, for every $1\leq\tau<\frac{2^{\ast}(2-\alpha)}{2\alpha}$, there exists
a constant $C$ such that for any $\eta $, $1<\eta \leq \frac{n}{n-2} $, 
\begin{equation}
1+\mathcal{K}_M^{\tau}(t)t^{2\tau}\leq C\left( 1+h(t)\right) ^{\eta },
\label{K_lemma_2}
\end{equation}%
for every $t\geq 1$, where $\eta =\eta (\alpha )=\frac{\alpha }{2-\alpha }$
and the constant $C$ depends only on $m_{\alpha }$, $\sup_{0\leq t\leq
1}h^{\prime \prime }(t)$, $\alpha $.
\end{lemma}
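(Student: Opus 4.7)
The strategy is to reduce the estimate to a bound on $h'(t)t$ via the right-hand inequality in~(\ref{the function h(t)}), and then apply Lemma~\ref{h'_lemma}; the case $\tau=1$ is essentially the argument of~\cite{Marcellini-Papi 2006}, and raising to the power $\tau$ amounts to exponent bookkeeping.

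First I would combine the definition $\mathcal{K}_M\le h''+h'/t$ with the right-hand side of~(\ref{the function h(t)}) to obtain
\begin{equation*}
\mathcal{K}_M(t)\le C\left[\left(\frac{h'(t)}{t}\right)^{\alpha}+\frac{h'(t)}{t}\right].
\end{equation*}
Multiplying by $t^{2}$, the crucial algebraic identity is
$\left(h'(t)/t\right)^{\alpha}t^{2}=(h'(t)t)^{\alpha}\,t^{2-2\alpha}$, and since $\alpha>1$ and $t\ge 1$, the factor $t^{2-2\alpha}\le 1$. This lets me absorb all the $t$ factors into $h'(t)t$ and conclude
\begin{equation*}
\mathcal{K}_M(t)\,t^{2}\le C\,\bigl(1+(h'(t)t)^{\alpha}\bigr),
\end{equation*}
where one uses $h'(t)t\le 1+(h'(t)t)^{\alpha}$ since $\alpha\ge 1$.

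Next, Lemma~\ref{h'_lemma} supplies $h'(t)t\le C(1+h(t))^{1/(2-\alpha)}$ for $t\ge 1$, so $\mathcal{K}_M(t)\,t^{2}\le C\,(1+h(t))^{\alpha/(2-\alpha)}$. Raising to the power $\tau$ and absorbing the additive $1$ into the right-hand side (which is $\ge 1$) gives
\begin{equation*}
1+\mathcal{K}_M^{\tau}(t)\,t^{2\tau}\le C\,(1+h(t))^{\tau\alpha/(2-\alpha)}.
\end{equation*}
The hypothesis $\tau<2^{\ast}(2-\alpha)/(2\alpha)$ is exactly the requirement $\tau\alpha/(2-\alpha)<2^{\ast}/2=n/(n-2)$, while $\tau\ge 1$ together with $\alpha>1$ ensures $\tau\alpha/(2-\alpha)>1$. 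Hence the choice $\eta=\tau\alpha/(2-\alpha)$ (or any larger value up to $n/(n-2)$) meets the admissible range $1<\eta\le n/(n-2)$ and finishes the proof; the dependence of $C$ on $m_{\alpha}$, $\sup_{0\le t\le 1}h''(t)$ and $\alpha$ is inherited directly from Lemma~\ref{h'_lemma} and~(\ref{the function h(t)}).

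There is no serious obstacle here: the proof is a short chain of algebraic manipulations combining the structural bound on $h''$ with the already-proved Lemma~\ref{h'_lemma}. The only step that requires a little care is the reformulation $\left(h'/t\right)^{\alpha}t^{2}=(h't)^{\alpha}t^{2-2\alpha}$ together with the sign check $2-2\alpha<0$; without this one cannot convert an estimate phrased in $h'/t$ into the estimate in $h'(t)t$ that Lemma~\ref{h'_lemma} delivers, and the quantitative match between the constraints on $\tau$, $\alpha$ and $\eta$ would not close.
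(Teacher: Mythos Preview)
Your proof is correct and follows essentially the same route as the paper: both use $\mathcal{K}_M\le h''+h'/t$, the right-hand side of~(\ref{the function h(t)}), the algebraic rewriting $(h'/t)^{\alpha}t^{2}=(h't)^{\alpha}t^{2-2\alpha}$ with $t^{2-2\alpha}\le 1$, and Lemma~\ref{h'_lemma}. The only cosmetic difference is that the paper first splits $\mathcal{K}_M^{\tau}t^{2\tau}\le (h't)^{\tau}+(h''t^{2})^{\tau}$ and then bounds each piece, whereas you first bound $\mathcal{K}_M t^{2}$ and raise to the power $\tau$ at the end; the resulting exponent is $\tau\alpha/(2-\alpha)$ in both cases (the value $\eta=\alpha/(2-\alpha)$ printed in the statement is evidently missing the factor $\tau$, as the paper's own proof shows).
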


\begin{proof}
By the definition of $\mathcal{K}_M$ we have that we have that 
\begin{equation}  \label{K_lemma_3}
\mathcal{K}_M^{\tau}(t)t^{2\tau}\leq \left(\frac{h^{\prime}(t)}{t}%
\right)^{\tau}t^{2\tau}+\left( h^{\prime\prime}(t)\right)^{\tau}t^{2\tau}
=\left( h^{\prime}(t)t\right)^{\tau}+\left(
h^{\prime\prime}(t)t^2\right)^{\tau}
\end{equation}
for every $t\geq 1$. By the right hand side of (\ref{the function h(t)}) and
by Lemma \ref{h'_lemma} we obtain 
\begin{equation}  \label{K_lemma_4}
h^{\prime\prime}(t)t^2\leq m_{\alpha}C \left(1+h(t)\right)^{\frac{1}{2-\alpha%
}}+m_{\alpha}C^{\alpha}\left( 1+h(t)\right)^{\frac{\alpha}{2-\alpha}%
}t^{2-2\alpha}
\end{equation}
\begin{equation*}
\leq C\left(1+h(t)\right)^{\frac{\alpha}{2-\alpha}}.
\end{equation*}
By putting together (\ref{K_lemma_3}) and (\ref{K_lemma_4}) we obtain the
result.
\end{proof}

\section{A-priori estimates\label{Section: A priori estimates}}

By the representation $f\left( x, \xi\right)=g\left( x, \left\vert \xi
\right\vert \right)$, we have 
\begin{equation}  \label{repres_1}
f_{\xi_i^{\alpha}}\left( x, \xi\right)=g_{t}\left( x, \left\vert \xi
\right\vert \right)\frac{\xi_{i}^{\alpha}}{\left\vert \xi\right\vert},
\end{equation}
\begin{equation}  \label{repres_2}
f_{\xi_i^{\alpha}\xi_j^{\beta}}\left( x, \xi\right)=\left( \frac{%
g_{tt}\left(x, \left\vert \xi \right\vert\right)}{\left\vert \xi \right\vert}%
-\frac{g_{t}\left(x, \left\vert \xi \right\vert \right)}{\left\vert \xi
\right\vert^2}\right)\xi_{i}^{\alpha}\xi_{j}^{\beta}+\frac{g_t\left( x,
\left\vert\xi\right\vert \right)}{\left\vert\xi\right\vert}%
\delta_{\xi_i^{\alpha}\xi_j^{\beta}}.
\end{equation}
Thus, the following ellipticity estimates hold:

\begin{equation}  \label{ellip}
\min\left\{ g_{tt}\left( x, \left\vert\xi\right\vert\right), \frac{%
g_{t}\left( x, \left\vert\xi\right\vert\right)}{\left\vert\xi\right\vert}%
\right\}\left\vert\lambda\right\vert^2 \leq \sum_{i, j, \alpha,
\beta}f_{\xi_i^{\alpha}\xi_j^{\beta}}\left(x, \xi\right)
\lambda_i^{\alpha}\lambda_j^{\beta}
\end{equation}
\begin{equation*}
\leq \max\left\{ g_{tt}\left( x, \left\vert\xi\right\vert\right), \frac{%
g_{t}\left( x, \left\vert\xi\right\vert\right)}{\left\vert\xi\right\vert}%
\right\}\left\vert\lambda\right\vert^2,
\end{equation*}

for every $\lambda$, $\xi\in\mathbb{R}^{m\times n}$. Let us define 
\begin{equation}  \label{definition_H_min}
\mathcal{H}_m\left( x, t\right)= \min\left\{ g_{tt}\left( x, t\right), \frac{%
g_{t}\left( x, t\right)}{t}\right\}
\end{equation}
and 
\begin{equation}  \label{definition_H}
\mathcal{H}_M\left( x, t\right)= \max\left\{ g_{tt}\left( x, t\right), \frac{%
g_{t}\left( x, t\right)}{t}\right\},
\end{equation}
then (\ref{ellip}) becomes 
\begin{equation}  \label{ellip2}
\mathcal{H}_m\left(x, \left\vert
\xi\right\vert\right)\left\vert\lambda\right\vert^2 \leq \sum_{i, j, \alpha,
\beta}f_{\xi_i^{\alpha}\xi_j^{\beta}}\left(x, \xi\right)
\lambda_i^{\alpha}\lambda_j^{\beta}\leq \mathcal{H}_M\left( x, \left\vert
\xi\right\vert \right) \left\vert\lambda\right\vert^2,
\end{equation}
for every $\lambda$, $\xi\in\mathbb{R}^{m\times n}$.

We make the following supplementary assumption, which could be later removed
with an approximating procedure, for instance as in Section 5 of \cite%
{Marcellini 1996} and in Section 6 of \cite{Marcellini-Papi 2006}: there
exist two positive constants $N,M$ such that 
\begin{equation}
N\left\vert \lambda \right\vert ^{2}\leq \sum\limits_{i,j,\alpha ,\beta
}f_{\xi _{i}^{\alpha }\xi _{j}^{\beta }}\left( x,\xi \right) \lambda
_{i}^{\alpha }\lambda _{j}^{\beta }\leq M\left\vert \lambda \right\vert ^{2},
\label{suppl_ass}
\end{equation}%
for every $\lambda ,\xi \in \mathbb{R}^{m\times n}$ and for almost every $%
x\in \Omega $. This is equivalent to say that both $\frac{g_{t}}{t}$ and $%
g_{tt}$ are bounded by constants $N$, $M$ for every $t>0$ and for almost
every $x\in \Omega $. This assumption allows us to consider $u$ as a
function of class $W_{\limfunc{loc}}^{1,\infty }\left( \Omega ,\mathbb{R}%
^{m}\right) \cap W_{\limfunc{loc}}^{2,2}\left( \Omega ,\mathbb{R}^{m}\right) 
$.
We denote $B_{\rho }$ and $B_{R}$ balls of radii, respectively, $\rho $ and $%
R$ ($\rho <R$) contained in $\Omega $ and with the same center. In what
follows, we will denote by 
\begin{equation*}
\tilde{B}_R=B_R\cap \left\{ x:\, \vert Du(x)\vert\geq 1\right\}.
\end{equation*}

\begin{lemma}
\label{lemma1} Let $g, h$ respectively satisfy (\ref{main assumptions}) and (%
\ref{the function h(t)}). Suppose that the supplementary condition (\ref%
{suppl_ass}) is satisfied. Let $u\in W_{\limfunc{loc}}^{1,1}\left( \Omega ,%
\mathbb{R}^{m}\right) $ be a local minimizer of (\ref{energy-integral 1}).
Then, under the notation $\tau =\left( 2\vartheta -1\right) \vartheta $, for
every $\rho ,R$ $(0<\rho <R)$ there exists a constant $C$, not depending on $%
m$, and $M,$ such that 
\begin{equation}
\left\Vert Du\right\Vert _{L^{\infty }\left( \tilde{B}_{\rho },\mathbb{R}%
^{m\times n}\right) }^{\left( 1-\beta -\frac{2}{2^{\ast }}\tau \right)
n}\leq \frac{C}{\left( R-\rho \right) ^{n}}\int_{\tilde{B}_{R}}\left(
1+\left\vert Du\right\vert ^{2\tau }\mathcal{K}_{M}^{\tau }\left( \left\vert
Du\right\vert \right) \right) \,dx.  \label{estimates_lemma}
\end{equation}
The constant $C$ depends on $n,\vartheta,\beta,\alpha$.
\end{lemma}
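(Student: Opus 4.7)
The strategy is a second-variation Caccioppoli inequality followed by a Sobolev-based reverse-H\"{o}lder inequality and a Moser iteration; the treatment of the mixed derivative $g_{tx_k}$ via condition (\ref{main assumptions})$_{3}$ is the novel point. Under (\ref{suppl_ass}) the minimizer $u$ lies in $W^{2,2}_{\mathrm{loc}}(\Omega;\mathbb{R}^{m})$, so I may differentiate the Euler--Lagrange system (\ref{differential system 2}) in the direction $x_k$ and test the result against $\varphi^{\alpha}=\eta^{2}u^{\alpha}_{x_k}\Phi(|Du|)$, where $\eta\in C^{1}_{0}(B_{R})$ is a standard cutoff with $\eta\equiv 1$ on $B_{\rho}$ and $\Phi$ is the function from the Remark after Lemma~\ref{Lemma for Phi - the case beta small}. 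After summing over $k,\alpha$ and integrating by parts, the ellipticity (\ref{ellip2}) controls the principal term from below by $\int\eta^{2}\Phi(|Du|)\mathcal{H}_{m}(x,|Du|)|D|Du||^{2}\,dx$, while the $\Phi'$-contribution has a favourable sign (coming from the isotropy of $f(x,\xi)=g(x,|\xi|)$) and can be discarded. The cutoff error is estimated routinely in terms of $(R-\rho)^{-2}\int\Phi\,\mathcal{H}_{M}|Du|^{2}\,dx$, and the $x$-derivative error is controlled through (\ref{main assumptions})$_{3}$ combined with Young's inequality: the power $\vartheta$ couples with the ellipticity to produce, after absorption on the left, the exponent $\tau=(2\vartheta-1)\vartheta$. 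Using then the first two inequalities of (\ref{main assumptions}) to replace $\mathcal{H}_{m}$ and $\mathcal{H}_{M}$ with $\mathcal{K}_{m}$ and $\mathcal{K}_{M}^{\vartheta}$, one arrives at the Caccioppoli-type inequality
\[
\int_{\tilde{B}_{R}}\eta^{2}\Phi(|Du|)\mathcal{K}_{m}(|Du|)|D|Du||^{2}\,dx \le \frac{C}{(R-\rho)^{2}}\int_{\tilde{B}_{R}}\bigl(1+\Phi(|Du|)\mathcal{K}_{M}^{\tau}(|Du|)|Du|^{2\tau}\bigr)dx.
\]

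Next, I introduce the primitive $v:=\int_{1}^{|Du|}\sqrt{\Phi(s)\mathcal{K}_{m}(s)}\,ds$, so that $|Dv|^{2}=\Phi(|Du|)\mathcal{K}_{m}(|Du|)|D|Du||^{2}$. Sobolev's embedding applied to $\eta v$ produces $\|\eta v\|_{L^{2^{\ast}}}^{2}\le C\int\bigl(|D\eta|^{2}v^{2}+\eta^{2}|Dv|^{2}\bigr)dx$, whose right-hand side is bounded by the Caccioppoli inequality above. The master lower bound (\ref{lemma_sigma_fissato}) of Lemma~\ref{lemma for phi for any gamma}, applied with $\sigma=1/\tau$ (an admissible choice precisely because of the compatibility $(2\vartheta-1)\vartheta<(1-\beta)\,2^{\ast}/2$ encoded in (\ref{the function h(t)})), then replaces $v^{2^{\ast}}$ on the left-hand side by a power of $(|Du|-1)^{2^{\ast}(\gamma/2+1)}|Du|^{-2^{\ast}\beta}\mathcal{K}_{M}^{\tau}(|Du|)$. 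The outcome is a reverse-H\"{o}lder-type inequality whose integrability gap $2^{\ast}/2>1$ is uniform in the parameter $\gamma\ge 0$.

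A Moser iteration over a geometric sequence of exponents $\gamma_{k}\uparrow\infty$ and shrinking radii $\rho_{k}\downarrow\rho$ (for instance $\rho_{k}=\rho+(R-\rho)2^{-k}$) is then performed in the standard way: the product of the constants telescopes, the right-hand sides are dominated uniformly by the single integral $\int_{\tilde{B}_{R}}\bigl(1+|Du|^{2\tau}\mathcal{K}_{M}^{\tau}(|Du|)\bigr)dx$, and in the limit the left-hand sides converge to a power of $\|Du\|_{L^{\infty}(\tilde{B}_{\rho})}$. Tracking the exponents throughout the iteration (balancing the gain $2^{\ast}/2$ per step against the weight $|Du|^{-2^{\ast}\beta}$ and against the target power $|Du|^{2\tau}\mathcal{K}_{M}^{\tau}$) yields exactly the exponent $(1-\beta-\tfrac{2}{2^{\ast}}\tau)n$ on the left of (\ref{estimates_lemma}); positivity of this exponent is guaranteed by the very restriction on $\beta$ and $\tau$ built into (\ref{the function h(t)}). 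The genuinely new difficulty of the whole argument is the absorption of the $g_{tx_k}$-term, which is why (\ref{main assumptions})$_{3}$ is stated in the quantitatively sharp form it has: any weaker bound would make $\sigma=1/\tau$ inadmissible in Lemma~\ref{lemma for phi for any gamma}, the integrability gap would close, and the Moser iteration would break down.
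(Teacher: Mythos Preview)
Your plan is correct and follows essentially the same route as the paper's proof: second variation tested with $\varphi^{\alpha}=\eta^{2}u^{\alpha}_{x_k}\Phi(|Du|)$, non-negativity of the $\Phi'$-term via the isotropic structure (this is the paper's $\tilde I_{3}\ge 0$), absorption of the $g_{tx_k}$-terms through (\ref{main assumptions})$_{3}$ and Young's inequality producing the exponent $\tau=(2\vartheta-1)\vartheta$, passage from $\mathcal{H}_{m},\mathcal{H}_{M}$ to $\mathcal{K}_{m},\mathcal{K}_{M}^{\vartheta}$, Sobolev on the primitive $G(t)=1+\int_{1}^{t}\sqrt{\Phi\,\mathcal{K}_{m}}\,ds$, the lower bound (\ref{lemma_sigma_fissato}), and Moser iteration along the recursion $\gamma_{i+1}=\tfrac{2^{\ast}}{2}(\gamma_i+2)-(2\tau+2^{\ast}\beta)$. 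The only point to make explicit in your write-up is that the Caccioppoli constant carries a factor $c(1+c_{\Phi})\sim c(1+\gamma)$ (coming from (\ref{inequality for Phi - final}) in the estimate of the $\tilde I_{6}$-type term), so that the iterated constants form the convergent product $\prod_{i}(c(\gamma_i+1)^{3}4^{i}/(R-\rho)^{2})^{(2/2^{\ast})^{i}}$; this is what ``telescopes'' to $C/(R-\rho)^{n}$.
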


\begin{proof}
Let $u$ be a local minimizer of (\ref{energy-integral 1}). We denote by $%
u=(u^{\alpha})_{\alpha=1,\dots, n}$ its components. By the left hand side of
(\ref{suppl_ass}), $u\in W^{1, 2}\left( \Omega, \mathbb{R}^m\right)$ and by
the right hand side of (\ref{suppl_ass}) $u$ satisfies the Euler's first
variation: 
\begin{equation*}
\int_{\Omega}\sum_{i, \alpha}f_{\xi_i^{\alpha}}\left( x,
Du\right)\varphi_{x_i}^{\alpha}\,dx=0,
\end{equation*}
for every $\varphi=(\varphi^{\alpha})\in W^{1, 2}_0\left(\Omega, \mathbb{R}%
^m\right)$. Using the technique of difference quotients we can prove that $u$
admits second order weak partial derivatives, precisely that $u\in W^{2, 2}_{%
\limfunc{loc}}\left(\Omega, \mathbb{R}^m\right)$ and satisfies the second
variation 
\begin{equation}  \label{euler_2}
\int_{\Omega}\left( \sum\limits_{i, j, \alpha, \beta}
f_{\xi_i^{\alpha}\xi_j^{\beta}}\left(x,
Du\right)\varphi_{x_i}^{\alpha}u_{x_jx_k}^{\beta}+\sum\limits_{i, \alpha}
f_{\xi_i^{\alpha}x_k}\left(x, Du\right)\varphi_{x_i}^{\alpha} \right)\,dx=0,
\end{equation}
for every $k=1,\dots, n$ and for every $\varphi=\left(\varphi^{\alpha}%
\right)\in W^{1, 2}_0\left(\Omega, \mathbb{R}^m\right)$.

Let $R>0$ and $\eta\in C^1_0\left( B_R\right)$. Fixed a positive integer $%
k\leq n$ we consider a test function $\varphi=(\varphi)_{\alpha=1,\dots, n}$
with components defined by 
\begin{equation*}
\varphi^{\alpha}=\eta^2 u_{x_k}^{\alpha}\Phi\left(\left\vert Du\right\vert
\right),
\end{equation*}
where $\Phi:\left[ 0, +\infty\right)\mapsto \left[0, +\infty\right)$ is an
increasing bounded Lipschitz continuous function, such that there exists a
constant $c_{\Phi}\geq 0$ such that 
\begin{equation}  \label{phi_cond}
\Phi^{\prime}(t)t\leq c_{\Phi}\left( 1+ \Phi(t)\right)
\end{equation}
for every $t\geq 1$ and such that $\Phi(t)=0$ if $t\in [0, 1]$. Then, for
the partial derivatives of $\varphi$, it holds 
\begin{equation*}
\varphi_{x_i}^{\alpha}=2\eta\eta_{x_i}u_{x_k}^{\alpha}\Phi\left(\left\vert
Du \right\vert\right)+\eta^2 u_{x_kx_i}^{\alpha}\Phi\left(\left\vert Du
\right\vert\right)+\eta^2 u_{x_k}^{\alpha}\Phi^{\prime}\left(\left\vert Du
\right\vert\right)\left(\left\vert Du \right\vert\right)_{x_i}.
\end{equation*}
From (\ref{euler_2}), we obtain 
\begin{equation}  \label{apriori_1}
0=\int_{\tilde{B}_R}2\eta\Phi\left( \left\vert Du \right\vert\right)\sum_{i,
j, \alpha, \beta} f_{\xi_i^{\alpha}\xi_j^{\beta}}\left(x,
Du\right)\eta_{x_i} u_{x_jx_k}^{\beta}u_{x_k}^{\alpha}\,dx
\end{equation}
\begin{equation*}
+\int_{\tilde{B}_R}\eta^2\Phi\left( \left\vert Du \right\vert\right)\sum_{i,
j, \alpha, \beta} f_{\xi_i^{\alpha}\xi_j^{\beta}}\left(x,
Du\right)u_{x_jx_k}^{\beta}u_{x_ix_k}^{\alpha}\,dx
\end{equation*}
\begin{equation*}
+\int_{\tilde{B}_R}\eta^2\Phi^{\prime}\left( \left\vert Du
\right\vert\right)\sum_{i, j, \alpha,
\beta}f_{\xi_i^{\alpha}\xi_j^{\beta}}\left(x,
Du\right)u_{x_k}^{\alpha}u_{x_jx_k}^{\beta}\left( \left\vert Du
\right\vert\right)_{x_i}\,dx
\end{equation*}
\begin{equation*}
+\int_{\tilde{B}_R}2\eta\Phi\left( \left\vert Du \right\vert\right)\sum_{i,
\alpha}f_{\xi_i^{\alpha}x_k}\left(x, Du\right)\eta_{x_i}u_{x_k}^{\alpha}\,dx
\end{equation*}
\begin{equation*}
+\int_{\tilde{B}_R}\eta^2\Phi\left( \left\vert Du \right\vert\right)\sum_{i,
\alpha}f_{\xi_i^{\alpha}x_k}\left(x, Du\right)u_{x_ix_k}^{\alpha}\,dx
\end{equation*}
\begin{equation*}
+\int_{\tilde{B}_R}\eta^2\Phi^{\prime}\left( \left\vert Du
\right\vert\right)\sum_{i, \alpha}f_{\xi_i^{\alpha}x_k}\left(x,
Du\right)u_{x_k}^{\alpha}\left( \left\vert Du \right\vert\right)_{x_i}\,dx
\end{equation*}
\begin{equation*}
=:I_1+I_2+I_3+I_4+I_5+I_6.
\end{equation*}

We start estimating $I_{1}$ in (\ref{apriori_1}) with the Cauchy-Schwarz
inequality and the Young's inequality $2ab\leq \frac{1}{2}a^{2}+2b^{2}$: 
\begin{equation}
\left\vert I_{1}\right\vert \leq \int_{\tilde{B}_{R}}2\Phi \left( \left\vert
Du\right\vert \right) \left[ \eta ^{2}\sum_{i,j,\alpha ,\beta }f_{\xi
_{i}^{\alpha }\xi _{j}^{\beta }}\left( x,Du\right) u_{x_{j}x_{k}}^{\beta
}u_{x_{i}x_{k}}^{\alpha }\right] ^{1/2}  \label{I_1_estimate}
\end{equation}%
\begin{equation*}
\cdot \left[ \sum_{i,j,\alpha ,\beta }f_{\xi _{i}^{\alpha }\xi _{j}^{\beta
}}\left( x,Du\right) \eta _{x_{i}}\eta _{x_{j}}u_{x_{k}}^{\alpha
}u_{x_{k}}^{\beta }\right] ^{1/2}\,dx
\end{equation*}%
\begin{equation*}
\leq \int_{\tilde{B}_{R}}\Phi \left( \left\vert Du\right\vert \right) \Biggl[%
\frac{1}{2}\eta ^{2}\sum_{i,j,\alpha ,\beta }f_{\xi _{i}^{\alpha }\xi
_{j}^{\beta }}\left( x,Du\right) u_{x_{j}x_{k}}^{\beta
}u_{x_{i}x_{k}}^{\alpha }
\end{equation*}%
\begin{equation*}
+2\sum_{i,j,\alpha ,\beta }f_{\xi _{i}^{\alpha }\xi _{j}^{\beta }}\eta
_{x_{i}}\eta _{x_{j}}u_{x_{k}}^{\alpha }u_{x_{k}}^{\beta }\Biggr]\,dx.
\end{equation*}%
From (\ref{apriori_1}) and (\ref{I_1_estimate}) we obtain 
\begin{equation}
\frac{1}{2}I_{2}+I_{3}+I_{4}+I_{5}+I_{6}\leq 2\int_{\tilde{B}_{R}}\Phi
\left( \left\vert Du\right\vert \right) \sum_{i,j,\alpha ,\beta }f_{\xi
_{i}^{\alpha }\xi _{j}^{\beta }}\eta _{x_{i}}\eta _{x_{j}}u_{x_{k}}^{\alpha
}u_{x_{k}}^{\beta }\,dx.  \label{apriori_2}
\end{equation}%
We use the expression of the second derivatives of $f$ to estimate $I_{3}$.
Since 
\begin{equation}
\left( \left\vert Du\right\vert \right) _{x_{i}}=\frac{1}{\left\vert
Du\right\vert }\sum_{\alpha ,k}u_{x_{i}x_{k}}^{\alpha }u_{x_{k}}^{\alpha },
\label{sum_motiv}
\end{equation}%
it is natural to sum over $k$ and we observe that 
\begin{equation}
\sum_{k}\sum_{i,j,\alpha ,\beta }f_{\xi _{i}^{\alpha }\xi _{j}^{\beta
}}\left( x,Du\right) u_{x_{k}}^{\alpha }u_{x_{j}x_{k}}^{\beta }\left(
\left\vert Du\right\vert \right) _{x_{i}}  \label{I_3_estimate_1}
\end{equation}%
\begin{equation*}
=\left( \frac{g_{tt}\left( x,\left\vert Du\right\vert \right) }{\left\vert
Du\right\vert ^{2}}-\frac{g_{t}\left( x,\left\vert Du\right\vert \right) }{%
\left\vert Du\right\vert ^{3}}\right) \sum_{i,j,k,\alpha ,\beta
}u_{x_{i}}^{\alpha }u_{x_{j}}^{\beta }u_{x_{j}x_{k}}^{\beta
}u_{x_{k}}^{\alpha }\left( \left\vert Du\right\vert \right) _{x_{i}}
\end{equation*}%
\begin{equation*}
+\frac{g_{t}\left( x,\left\vert Du\right\vert \right) }{\left\vert
Du\right\vert }\sum_{i,k,\alpha }u_{x_{i}x_{k}}^{\alpha }u_{x_{k}}^{\alpha
}\left( \left\vert Du\right\vert \right) _{x_{i}}
\end{equation*}%
\begin{equation*}
=\left( \frac{g_{tt}\left( x,\left\vert Du\right\vert \right) }{\left\vert
Du\right\vert }-\frac{g_{t}\left( x,\left\vert Du\right\vert \right) }{%
\left\vert Du\right\vert ^{2}}\right) \sum_{i,k,\alpha }u_{x_{i}}^{\alpha
}\left( \left\vert Du\right\vert \right) _{x_{i}}u_{x_{k}}^{\alpha }\left(
\left\vert Du\right\vert \right) _{x_{k}}
\end{equation*}%
\begin{equation*}
+g_{t}\left( x,\left\vert Du\right\vert \right) \sum_{i}\left( \left\vert
Du\right\vert \right) _{x_{i}}^{2}
\end{equation*}%
\begin{equation*}
=\left( \frac{g_{tt}\left( x,\left\vert Du\right\vert \right) }{\left\vert
Du\right\vert }-\frac{g_{t}\left( x,\left\vert Du\right\vert \right) }{%
\left\vert Du\right\vert ^{2}}\right) \sum_{\alpha }\left[
\sum_{i}u_{x_{i}}^{\alpha }\left\vert Du\right\vert _{x_{i}}\right] ^{2}
\end{equation*}%
\begin{equation*}
+g_{t}\left( x,\left\vert Du\right\vert \right) \left\vert D\left(
\left\vert Du\right\vert \right) \right\vert ^{2}.
\end{equation*}

Now, if we denote with $\tilde{I}_{s}$ the sum over $k$ of $I_{s}$, for $%
s=1,\dots ,6$, we have that 
\begin{equation}
\tilde{I}_{3}=\int_{\tilde{B}_{R}}\eta ^{2}\Phi ^{\prime }\left( \left\vert
Du\right\vert \right) \Biggl[\left( \frac{g_{tt}\left( x,\left\vert
Du\right\vert \right) }{\left\vert Du\right\vert }-\frac{g_{t}\left(
x,\left\vert Du\right\vert \right) }{\left\vert Du\right\vert ^{2}}\right)
\label{I_3_estimate_2}
\end{equation}%
\begin{equation*}
\cdot \sum_{\alpha }\left[ \sum_{i}u_{x_{i}}^{\alpha }\left( \left\vert
Du\right\vert \right) _{x_{i}}\right] ^{2}+g_{t}\left( x,\left\vert
Du\right\vert \right) \left\vert D\left( \left\vert Du\right\vert \right)
\right\vert ^{2}\Biggl]\,dx
\end{equation*}%
\begin{equation*}
=\int_{\tilde{B}_{R}}\eta ^{2}\Phi ^{\prime }\left( \left\vert Du\right\vert
\right) \Biggl[\frac{g_{tt}\left( x,\left\vert Du\right\vert \right) }{%
\left\vert Du\right\vert }\sum_{\alpha }\left( \sum_{i}u_{x_{i}}^{\alpha
}\left( \left\vert Du\right\vert \right) _{x_{i}}\right) ^{2}+g_{t}\left(
x,\left\vert Du\right\vert \right) \left\vert D\left( \left\vert
Du\right\vert \right) \right\vert ^{2}
\end{equation*}%
\begin{equation*}
-\frac{g_{t}\left( x,\left\vert Du\right\vert \right) }{\left\vert
Du\right\vert ^{2}}\sum_{\alpha }\left( \sum_{i}u_{x_{i}}^{\alpha }\left(
\left\vert Du\right\vert \right) _{x_{i}}\right) ^{2}\Biggl]\,dx.
\end{equation*}%
Since, by Cauchy-Schwarz inequality we get 
\begin{equation}
\sum_{\alpha }\left( \sum_{i}u_{x_{i}}^{\alpha }\left( \left\vert
Du\right\vert \right) _{x_{i}}\right) ^{2}\leq \sum_{i,\alpha }\left(
u_{x_{i}}^{\alpha }\right) ^{2}\sum_{i}\left( \left\vert Du\right\vert
\right) _{x_{i}}^{2}\leq \left\vert Du\right\vert ^{2}\left\vert D\left(
\left\vert Du\right\vert \right) \right\vert ^{2},  \label{sec_C_S_ineq}
\end{equation}%
then we can conclude that 
\begin{equation}
\tilde{I}_{3}\geq \int_{\tilde{B}_{R}}\eta ^{2}\Phi ^{\prime }\left(
\left\vert Du\right\vert \right) \frac{g_{tt}\left( x,\left\vert
Du\right\vert \right) }{\left\vert Du\right\vert }\sum_{\alpha }\left(
\sum_{i}u_{x_{i}}^{\alpha }\left( \left\vert Du\right\vert \right)
_{x_{i}}\right) ^{2}\,dx\geq 0.  \label{I_3_estimate_final}
\end{equation}

Now, we consider the term $\frac{1}{2}I_2$ in inequality (\ref{apriori_2}).
From the ellipticity condition (\ref{ellip2}) 
\begin{equation}  \label{I_2_estimate}
\vert \tilde{I}_2\vert\geq \int_{\tilde{B}_R}\eta^2 \Phi\left(\left\vert
Du\right\vert\right)\mathcal{H}_m\left( x, \left\vert Du\right\vert
\right)\left\vert D^2u\right\vert^2\,dx.
\end{equation}

By using (\ref{I_3_estimate_final}), (\ref{I_2_estimate}) and by summing
over $k$ in formula (\ref{apriori_2}), we obtain 
\begin{equation}  \label{apriori_3}
\frac{1}{2}\int_{\tilde{B}_R}\eta^2 \Phi\left(\left\vert Du\right\vert\right)%
\mathcal{H}_m\left( x, \left\vert Du\right\vert\right) \left\vert D^2
u\right\vert^2\,dx
\end{equation}
\begin{equation*}
\leq \vert\tilde{I}_4\vert+\vert\tilde{I}_5\vert+\vert\tilde{I}%
_6\vert+2\int_{\tilde{B}_R}\Phi\left( \left\vert Du \right\vert\right)
\sum_{i, j, k, \alpha, \beta}
f_{\xi_i^{\alpha}\xi_j^{\beta}}\eta_{x_i}\eta_{x_j}u_{x_k}^{\alpha}u_{x_k}^{%
\beta}\,dx.
\end{equation*}

Consider now $\tilde{I}_4$. Since 
\begin{equation}  \label{motiv_2}
\left\vert f_{\xi_ix_k}\left(x, \xi\right)\right\vert\leq \left\vert
g_{tx_k}\left( x, \left\vert\xi\right\vert\right)\right\vert
\end{equation}
for almost every $x\in\Omega$ and for every $\xi\in\mathbb{R}^{m\times n}$,
then, by the third assumption of (\ref{main assumptions}), for $%
\tau\in\left( 1, \vartheta\right)$ to be fixed, 
\begin{equation}  \label{I_4_estimate}
\vert \tilde{I}_4\vert=\left\vert \int_{\tilde{B}_R}
2\eta\Phi\left(\left\vert Du\right\vert\right)\sum_{i, k,
\alpha}f_{\xi_{i}^{\alpha} x_k}\left( x, \left\vert
Du\right\vert\right)\eta_{x_i}u_{x_k}^{\alpha}\,dx\right\vert
\end{equation}
\begin{equation*}
\leq \int_{\tilde{B}_R}2\eta\left\vert D\eta\right\vert \Phi\left(\left\vert
Du\right\vert\right) \sum_{i} \left\vert g_{tx_k}\left( x, \left\vert
Du\right\vert\right)\right\vert \left\vert Du\right\vert\,dx
\end{equation*}
\begin{equation*}
\leq c\int_{\tilde{B}_R}2\eta\left\vert D\eta\right\vert
\Phi\left(\left\vert Du\right\vert\right) \mathcal{H}_m^{\vartheta}\left( x,
\left\vert Du\right\vert\right) \left\vert Du\right\vert^{1+\vartheta}\,dx
\end{equation*}
\begin{equation*}
\leq c\int_{\tilde{B}_R}2\eta\left\vert D\eta\right\vert
\Phi\left(\left\vert Du\right\vert\right) \mathcal{H}_M^{\vartheta}\left( x,
\left\vert Du\right\vert\right) \left\vert Du\right\vert^{1+\vartheta}\,dx.
\end{equation*}

By the Young's inequality and again the third condition of (\ref{main
assumptions}), we obtain 
\begin{equation}  \label{I_5_estimate}
\vert \tilde{I}_5\vert = \left\vert \int_{\tilde{B}_R}\eta^2
\Phi\left(\left\vert Du\right\vert\right)\sum_{i, k,
\alpha}f_{\xi_{i}^{\alpha} x_k}\left( x, \left\vert Du\right\vert\right)
u_{x_ix_k}^{\alpha}\,dx\right\vert
\end{equation}
\begin{equation*}
\leq c\int_{\tilde{B}_R}\eta^2\Phi\left(\left\vert
Du\right\vert\right)\sum_{k} \left\vert g_{tx_k}\left( x, \left\vert
Du\right\vert\right)\right\vert \left\vert D^2u\right\vert\,dx
\end{equation*}
\begin{equation*}
\leq c\int_{\tilde{B}_R}\eta^2\Phi\left(\left\vert Du\right\vert\right) 
\mathcal{H}_m^{\vartheta}\left( x, \left\vert Du\right\vert\right)
\left\vert Du\right\vert^{\vartheta}\left\vert D^2u\right\vert\,dx
\end{equation*}
\begin{equation*}
\leq c\int_{\tilde{B}_R}\eta^2\Phi\left(\left\vert Du\right\vert\right)
\left( \mathcal{H}_m \left( x, \left\vert Du\right\vert\right)\left\vert
D^2u\right\vert^2\right)^{1/2} \left( \mathcal{H}_m^{2\vartheta-1}\left( x,
\left\vert Du\right\vert\right) \left\vert
Du\right\vert^{2\vartheta}\right)^{1/2}\,dx
\end{equation*}
\begin{equation*}
\leq c\varepsilon\int_{\tilde{B}_R} \eta^2\Phi\left(\left\vert
Du\right\vert\right)\mathcal{H}_m \left( x, \left\vert
Du\right\vert\right)\left\vert D^2u\right\vert^2\,dx
\end{equation*}
\begin{equation*}
+\frac{c}{4\varepsilon}\int_{\tilde{B}_R} \eta^2\Phi\left(\left\vert
Du\right\vert\right)\mathcal{H}_M^{2\vartheta-1}\left( x, \left\vert
Du\right\vert\right) \left\vert Du\right\vert^{2\vartheta}\,dx.
\end{equation*}

We choose $\varepsilon$ sufficiently small to absorb the first integral in
the last inequality of formula (\ref{I_5_estimate}) in the left hand side of
(\ref{apriori_3}). Similarly 
\begin{equation}  \label{I_6_estimate}
\vert \tilde{I}_6\vert =\left\vert \int_{\tilde{B}_R}\eta^2\Phi^{\prime}%
\left(\left\vert Du\right\vert\right)\sum_{i, k, \alpha} f_{\xi_{i}^{\alpha}
x_k}\left( x, \left\vert Du\right\vert\right)u_{x_k}^{\alpha} \left(
\left\vert Du\right\vert\right)_{x_i}\,dx\right\vert
\end{equation}
\begin{equation*}
\leq c\int_{\tilde{B}_R} \eta^2\Phi^{\prime}\left(\left\vert
Du\right\vert\right) \sum_{i, k}\left\vert g_{tx_k}\left( x, \left\vert
Du\right\vert\right)\right\vert \left\vert Du\right\vert \left(\left\vert
Du\right\vert\right)_{x_i}\,dx
\end{equation*}
\begin{equation*}
\leq c\int_{\tilde{B}_R}\eta^2\Phi^{\prime}\left(\left\vert
Du\right\vert\right)\left\vert Du\right\vert \left( \mathcal{H}_m\left( x,
\left\vert Du\right\vert \right)\sum_{i}\left(\left\vert Du\right\vert
\right)_{x_i}^2\right)^{1/2}
\end{equation*}
\begin{equation*}
\cdot\left( \mathcal{H}_m^{2\vartheta-1}\left( x, \left\vert Du\right\vert
\right)\left\vert Du\right\vert^{2\vartheta} \right)^{1/2}\,dx
\end{equation*}
\begin{equation*}
\leq c\varepsilon \int_{\tilde{B}_R} \eta^2\Phi^{\prime}\left(\left\vert
Du\right\vert\right)\left\vert Du\right\vert \mathcal{H}_m\left( x,
\left\vert Du\right\vert \right)\sum_{i}\left(\left\vert Du\right\vert
\right)_{x_i}^2\,dx
\end{equation*}
\begin{equation*}
+\frac{c}{4\varepsilon}\int_{\tilde{B}_R}
\eta^2\Phi^{\prime}\left(\left\vert Du\right\vert\right)\left\vert
Du\right\vert \mathcal{H}_m^{2\vartheta-1}\left( x, \left\vert Du\right\vert
\right)\left\vert Du\right\vert^{2\vartheta}\,dx.
\end{equation*}
\begin{equation*}
\leq c_{\Phi}\varepsilon \int_{\tilde{B}_R} \eta^2\left(
1+\Phi\left(\left\vert Du\right\vert\right)\right)\mathcal{H}_m\left( x,
\left\vert Du\right\vert \right)\left\vert D^2u\right\vert^2\,dx
\end{equation*}
\begin{equation*}
+\frac{c_{\Phi}}{4\varepsilon}\int_{\tilde{B}_R} \eta^2\left( 1+
\Phi\left(\left\vert Du\right\vert\right)\right) \mathcal{H}%
_M^{2\vartheta-1}\left( x, \left\vert Du\right\vert \right)\left\vert
Du\right\vert^{2\vartheta}\,dx
\end{equation*}
where in the last inequality we have used (\ref{phi_cond}) and (\ref%
{sec_C_S_ineq}).

We add in both sides of (\ref{apriori_3}) the quantity 
\begin{equation*}
\int_{\tilde{B}_R}\eta^2 \mathcal{H}_m(x, \vert Du\vert)\left\vert
D^2u\right\vert^2\,dx
\end{equation*}
and we get

\begin{equation}  \label{apriori_4}
\int_{\tilde{B}_R}\eta^2 \left(1+\Phi\left(\left\vert
Du\right\vert\right)\right) \mathcal{H}_m\left( x, \left\vert
Du\right\vert\right) \left\vert D^2u\right\vert^2\,dx
\end{equation}
\begin{equation*}
\leq c\int_{\tilde{B}_R}2\eta\left\vert D\eta\right\vert
\Phi\left(\left\vert Du\right\vert\right) \mathcal{H}_M^{\vartheta}\left( x,
\left\vert Du\right\vert\right) \left\vert Du\right\vert^{1+\vartheta}\,dx
\end{equation*}
\begin{equation*}
+c\int_{\tilde{B}_R} \eta^2\Phi\left(\left\vert Du\right\vert\right)\mathcal{%
H}_M^{2\vartheta-1}\left( x, \left\vert Du\right\vert\right) \left\vert
Du\right\vert^{2\vartheta}\,dx
\end{equation*}
\begin{equation*}
+c_{\Phi}\varepsilon \int_{\tilde{B}_R} \eta^2\left(1+\Phi\left(\left\vert
Du\right\vert\right)\right) \mathcal{H}_m\left( x, \left\vert Du\right\vert
\right) \left\vert D^2 u\right\vert^2\,dx
\end{equation*}
\begin{equation*}
+\frac{c_{\Phi}}{4\varepsilon}\int_{\tilde{B}_R}
\eta^2\left(1+\Phi\left(\left\vert Du\right\vert\right)\right) \mathcal{H}%
_M^{2\vartheta-1}\left( x, \left\vert Du\right\vert \right)\left\vert
Du\right\vert^{2\vartheta}\,dx.
\end{equation*}
\begin{equation*}
+c \int_{\tilde{B}_R}\Phi\left( \left\vert Du \right\vert\right) \sum_{i, j,
k, \alpha, \beta}
f_{\xi_i^{\alpha}\xi_j^{\beta}}\eta_{x_i}\eta_{x_j}u_{x_k}^{\alpha}u_{x_k}^{%
\beta}\,dx.
\end{equation*}
Then, choosing $\varepsilon$ sufficiently and using the ellipticity
condition (\ref{ellip2}) in (\ref{apriori_4}) and by also using the
Cauchy-Schwarz inequality, we get

\begin{equation}  \label{apriori_5}
\int_{\tilde{B}_R}\eta^2 \Phi\left(\left\vert Du\right\vert\right)\mathcal{H}%
_m\left( x, \left\vert Du\right\vert\right) \left\vert D\left( \left\vert
Du\right\vert\right)\right\vert^2\,dx
\end{equation}
\begin{equation*}
\leq c\int_{\tilde{B}_R}2\eta\left\vert D\eta\right\vert
\Phi\left(\left\vert Du\right\vert\right)\mathcal{H}_M^{\vartheta}\left( x,
\left\vert Du\right\vert\right) \left\vert Du\right\vert^{1+\vartheta}\,dx
\end{equation*}
\begin{equation*}
+c(1+c_{\Phi})\int_{\tilde{B}_R} \eta^2\left( 1+ \Phi\left(\left\vert
Du\right\vert\right)\right) \mathcal{H}_M^{2\vartheta-1}\left( x, \left\vert
Du\right\vert\right) \left\vert Du\right\vert^{2\vartheta}\,dx
\end{equation*}
\begin{equation*}
+c \int_{\tilde{B}_R}\left\vert D\eta\right\vert^2 \Phi\left( \left\vert Du
\right\vert\right) \mathcal{H}_M^{2\vartheta-1}\left( x, \left\vert
Du\right\vert\right)\left\vert Du\right\vert^2 \,dx.
\end{equation*}
Then, since $\left\vert Du\right\vert\geq 1$ in $\tilde{B}_R$, (\ref%
{apriori_5}) becomes

\begin{equation}  \label{last_before_sobolev}
\int\limits_{\tilde{B}_R} \eta^2\Phi\left(\vert Du \vert\right)\mathcal{H}%
_m\left( x, \left\vert Du \right\vert \right)\left\vert D\left(\left\vert
Du\right\vert\right) \right\vert^2\,dx
\end{equation}
\begin{equation*}
\leq c\left( 1+c_{\Phi}\right)\int_{\tilde{B}_R}\left(\eta^2+\left\vert
D\eta\right\vert^2\right)\left( 1+ \Phi\left(\left\vert
Du\right\vert\right)\right)\bigl(\mathcal{H}_M^{\vartheta}\left( x,
\left\vert Du\right\vert\right)
\end{equation*}
\begin{equation*}
+\mathcal{H}_M^{2\vartheta-1}\left( x, \left\vert Du\right\vert\right)\bigr)%
\left\vert Du\right\vert^{2\vartheta}\,dx.
\end{equation*}
Now we deal with the problem of a non bounded and non Lipschitz continuous $%
\Phi$. We can approximate $\Phi$ with a sequence of functions $\Phi_r$, each
of them equal to $\Phi$ in the interval $[0, r]$ and extended continuously
to $[r, +\infty)$ with the constant value $\Phi(r)$. We insert $\Phi_r$ in (%
\ref{last_before_sobolev}) and we go to the limit as $r\to\infty$ by the
monotone convergence theorem. So we obtain that (\ref{last_before_sobolev})
is true for every $\Phi$ positive, increasing and local Lipschitz continuous
function in $[0, +\infty)$.

We apply (\ref{main assumptions}) and we find that the function $\mathcal{K}%
_M$ defined in (\ref{def_K}) satisfies the condition 
\begin{equation*}
\mathcal{H}_M(x, t)\leq M_{\vartheta} \mathcal{K}_M^{\vartheta}(t)
\end{equation*}
for every $t\geq 1$ and for almost every $x\in B_R$. Similarly, the function 
$\mathcal{K}_m$ defined in (\ref{def_K_min}) satisfies 
\begin{equation*}
\mathcal{H}_m(x, t)\geq m \mathcal{K}_m(t)
\end{equation*}
for every $t\geq 1$ and for almost every $x\in B_R$. We define 
\begin{equation*}
G(t)=1+\int_0^t \sqrt{\Phi(s) \mathcal{K}_m(s)}\,ds.
\end{equation*}
By the H\"older inequality, since $\Phi$ is increasing and $h^{\prime}(0)=0$%
, we get the following inequalities concerning $G$ and its derivatives:

\begin{equation*}
G^{2}(t)=\left( 1+\int_{0}^{t}\sqrt{\Phi (s)\mathcal{K}_{m}(s)}\,ds\right)
^{2}\leq 2+2\Phi (t)t\int_{0}^{t}h^{\prime \prime }(s)\,ds\leq 2+2\Phi
(t)th^{\prime }(t)
\end{equation*}%
\begin{equation*}
\leq 2\left( 1+\Phi (t)\mathcal{K}_{M}(t)t^{2}\right)
\end{equation*}%
and 
\begin{equation*}
\left\vert D\left( \eta G\left( \left\vert Du\right\vert \right) \right)
\right\vert ^{2}\leq 2\left\vert D\eta \right\vert ^{2}\left\vert G\left(
\left\vert Du\right\vert \right) \right\vert ^{2}+2\eta ^{2}\left\vert
G^{\prime }\left( \left\vert Du\right\vert \right) \right\vert
^{2}\left\vert D\left( \left\vert Du\right\vert \right) \right\vert ^{2}.
\end{equation*}%
We get 
\begin{equation}
\int_{\tilde{B}_{R}}\left\vert D\left( \eta G\left( \left\vert Du\right\vert
\right) \right) \right\vert ^{2}\,dx  \label{before_sobolev_2}
\end{equation}%
\begin{equation*}
\leq c\left( 1+c_{\Phi }\right) \int_{\tilde{B}_{R}}\left( \eta
^{2}+\left\vert D\eta \right\vert ^{2}\right) \left( 1+\Phi \left(
\left\vert Du\right\vert \right) \right) \bigl(\mathcal{K}_{M}^{\vartheta
^{2}}\left( x,\left\vert Du\right\vert \right)
\end{equation*}%
\begin{equation*}
\quad +\mathcal{K}_{M}^{(2\vartheta -1)\vartheta }\left( \left\vert
Du\right\vert \right) \bigr)\left\vert Du\right\vert ^{2\vartheta
}\,dx\,+\,c_{2}\int_{\tilde{B}_{R}}\left\vert D\eta \right\vert ^{2}\left(
1+\Phi \left( \left\vert Du\right\vert \right) \mathcal{K}_{M}\left(
\left\vert Du\right\vert \right) \left\vert Du\right\vert ^{2}\right) \,dx.
\end{equation*}%
We recall that $\vartheta $ satisfies the following conditions%
\begin{equation*}
1\leq \left( 2\vartheta -1\right) \vartheta <\left( 1-\beta \right) \frac{%
2^{\ast }}{2}
\end{equation*}
and we introduce the notation 
\begin{equation}
\tau =(2\vartheta -1)\vartheta .  \label{tau def}
\end{equation}%
Therefore $1\leq \tau <\left( 1-\beta \right) \frac{2^{\ast }}{2}$and $\tau
\geq \vartheta ^{2}$. Then, (\ref{before_sobolev_2}) leads to 
\begin{equation}
\int_{\tilde{B}_{R}}\left\vert D\left( \eta G\left( \left\vert Du\right\vert
\right) \right) \right\vert ^{2}\,dx  \label{before_sobolev_3}
\end{equation}%
\begin{equation*}
\leq c\left( 1+c_{\Phi }\right) \int_{\tilde{B}_{R}}\left( \eta
^{2}+\left\vert D\eta \right\vert ^{2}\right) \biggl[1+\Phi \left(
\left\vert Du\right\vert \right) \mathcal{K}_{M}^{\tau }\left( \left\vert
Du\right\vert \right) \left\vert Du\right\vert ^{2\tau }\biggr]\,dx.
\end{equation*}%
We apply the Sobolev inequality to get

\begin{equation}  \label{before Phi}
\biggl[\int_{\tilde{B}_R}\left\vert \eta G\left(\left\vert
Du\right\vert\right) \right\vert^{2^{\ast}}\,dx \biggr]^{2/2^{\ast}}
\end{equation}
\begin{equation*}
\leq c ( 1+ c_{\Phi})\int_{\tilde{B}_R}\left(\eta^2+\left\vert
D\eta\right\vert^2\right)\left( 1+\Phi \left(\left\vert Du
\right\vert\right) \mathcal{K}_M^{\tau}\left( \left\vert Du\right\vert
\right) \left\vert Du\right\vert^{2\tau} \right)\,dx.
\end{equation*}

We choose $\Phi$ equal to the function defined in (\ref{Phi}) if $\gamma\geq
1$. Then, since $t^{\gamma-2}\leq (t-1)^{\gamma-2}$ for every $\gamma\in%
\left[0,2\right]$ and $t\geq 1$, by Lemmata \ref{Lemma for Phi} and \ref%
{Lemma for Phi - the case beta small}, (\ref{before Phi}) becomes

\begin{equation*}
\biggl[\int_{\tilde{B}_R}\left\vert \eta G\left(\left\vert
Du\right\vert\right) \right\vert^{2^{\ast}}\,dx \biggr]^{2/2^{\ast}}
\end{equation*}
\begin{equation*}
\leq c(1+\gamma)\int_{\tilde{B}_R} \left( \eta^2+\left\vert
D\eta\right\vert^2\right) \left( 1+\left(\left\vert
Du\right\vert-1\right)^{\gamma} \mathcal{K}_M^{\vartheta}\left( \left\vert
Du\right\vert \right) \left\vert Du\right\vert^{2\vartheta}\right)\, dx.
\end{equation*}
Since $\mathcal{K}_m$ satisfies (\ref{lemma_sigma_fissato}), we get

\begin{equation}
\left[ \int_{\tilde{B}_{R}}\eta ^{2^{\ast }}\left( 1+\left( \left\vert
Du\right\vert -1\right) ^{\frac{2^{\ast }}{2}(\gamma +2)}\left\vert
Du\right\vert ^{-2^{\ast }\beta }\mathcal{K}_M^{\tau }\left( \left\vert
Du\right\vert \right) \right) \,dx\right] ^{2/2^{\ast }}
\label{after_sobolev_2}
\end{equation}%
\begin{equation*}
\leq c(\gamma +1)^{3}\int_{\tilde{B}_{R}}\left( \eta ^{2}+\left\vert D\eta
\right\vert ^{2}\right) \left( 1+\left( \left\vert Du\right\vert -1\right)
^{\gamma }\mathcal{K}_M^{\tau }\left( \left\vert Du\right\vert \right)
\left\vert Du\right\vert ^{2\tau }\right) \,dx.
\end{equation*}%
Now, since 
\begin{equation*}
1+\left( t-1\right) ^{\frac{2^{\ast }}{2}(\gamma +2)}t^{-2^{\ast }\beta }%
\mathcal{K}^{\tau }(t)
\end{equation*}%
\begin{equation*}
=1+\left( t-1\right) ^{\frac{2^{\ast }}{2}(\gamma +2)-(2\tau +2^{\ast }\beta
)}\left( t-1\right) ^{2\tau +2^{\ast }\beta }t^{-2^{\ast }\beta }\mathcal{K}%
_M^{\tau }(t)
\end{equation*}%
\begin{equation*}
\geq C\left( 1+\left( t-1\right) ^{\frac{2^{\ast }}{2}(\gamma +2)-(2\tau
+2^{\ast }\beta )}t^{2\tau }\mathcal{K}_M^{\tau }(t)\right) ,
\end{equation*}%
with $C$ not depending on $\gamma $, (\ref{after_sobolev_2}) becomes 
\begin{equation}
\left[ \int_{\tilde{B}_{R}}\eta ^{2^{\ast }}\left( 1+\left( \left\vert
Du\right\vert -1\right) ^{\frac{2^{\ast }}{2}(\gamma +2)-(2\tau +2^{\ast
}\beta )}\mathcal{K}^{\tau }\left( \left\vert Du\right\vert \right)
\left\vert Du\right\vert ^{2\tau }\right) \,dx\right] ^{2/2^{\ast }}
\label{after_sobolev_3}
\end{equation}%
\begin{equation*}
\leq c(\gamma +1)^{3}\int_{\tilde{B}_{R}}\left( \eta ^{2}+\left\vert D\eta
\right\vert ^{2}\right) \left( 1+\left( \left\vert Du\right\vert -1\right)
^{\gamma }\mathcal{K}^{\tau }\left( \left\vert Du\right\vert \right)
\left\vert Du\right\vert ^{2\tau }\right) \,dx.
\end{equation*}%
Let us fix $0<\rho <R$ and take $\eta \equiv 1$ in $\tilde{B}_{\rho }$ and $%
\left\vert D\eta \right\vert \leq \frac{2}{R-\rho }$. Then, fixed $\overline{%
\rho }<\overline{R}$, let us also define the decreasing sequence of radii $%
\left\{ \rho _{i}\right\} $, defined by 
\begin{equation*}
\rho _{i}=\overline{\rho }+\frac{\overline{R}-\overline{\rho }}{2^{i}},
\end{equation*}%
for every $i\geq 0$. We define the sequence $\left\{ \gamma _{i}\right\} $
defined by the recurrence $\gamma _{0}=0$, 
\begin{equation*}
\gamma _{i+1}=\frac{2^{\ast }}{2}\left( \gamma _{i}+2\right) -(2\tau
+2^{\ast }\beta ),
\end{equation*}%
which is non decreasing by the properties of $\beta $ and $\tau$. Then for
every $i\geq 0$ 
\begin{equation}
\left[ \int_{\tilde{B}_{\rho _{i+1}}}\left( 1+\left( \left\vert
Du\right\vert -1\right) ^{\gamma _{i+1}}\mathcal{K}_M^{\tau }\left(
\left\vert Du\right\vert \right) \left\vert Du\right\vert ^{2\tau }\right)
\,dx\right] ^{2/2^{\ast }}  \label{after_sobolev_31}
\end{equation}%
\begin{equation*}
\leq c(\gamma _{i}+1)^{3}\left( \frac{2^{i+1}}{\overline{R}-\overline{\rho }}%
\right) ^{2}\int_{\tilde{B}_{\rho _{i}}}\left( 1+\left( \left\vert
Du\right\vert -1\right) ^{\gamma _{i}}\mathcal{K}_M^{\tau }\left( \left\vert
Du\right\vert \right) \left\vert Du\right\vert ^{2\tau }\right) \,dx.
\end{equation*}%
By iterating (\ref{after_sobolev_31}), we get 
\begin{equation}
\left[ \int_{\tilde{B}_{\rho _{i+1}}}\left( 1+\left( \left\vert
Du\right\vert -1\right) ^{k_{i}\left( 2^{\ast }/2\right) ^{i+1}}\mathcal{K}%
_M^{\tau }\left( \left\vert Du\right\vert \right) \left\vert Du\right\vert
^{2\tau}\right) \,dx\right] ^{\left( 2/2^{\ast }\right) ^{i+1}}
\label{after_sobolev_32}
\end{equation}%
\begin{equation*}
\leq C\int_{\tilde{B}_{\overline{R}}}\left( 1+\left\vert Du\right\vert
^{2\tau}\mathcal{K}_M^{\tau }\left( \left\vert Du\right\vert \right) \right)
\,dx
\end{equation*}%
where we have denoted with $k_{i}=\left( 1-\beta -\frac{2}{2^{\ast }}\tau
\right) \left( 1-\frac{1}{(2^{\ast }/2)^{i+1}}\right) n$. The exponent in
the first integral is given by computing 
\begin{equation*}
\gamma _{i+1}=\gamma _{0}\left( \frac{2^{\ast }}{2}\right) ^{i+1}-2\left(
\beta +\frac{2}{2^{\ast }}\tau -1\right) \sum_{k=1}^{i+1}\left( \frac{%
2^{\ast }}{2}\right) ^{k}
\end{equation*}%
\begin{equation*}
=\left( 1-\beta -\frac{2}{2^{\ast }}\tau \right) \left[ \left( \frac{2^{\ast
}}{2}\right) ^{i+1}-1\right] n.
\end{equation*}%
Observe that the quantity $1-\beta -\frac{2}{2^{\ast }}\tau >0$ by the
restrictions on $\tau $. The constant $C$ in (\ref{after_sobolev_32}) is
such that 
\begin{equation*}
C\leq \prod_{k=0}^{\infty }\left( \frac{c(2^{\ast })^{3k}}{\overline{R}-%
\overline{\rho }}\right) ^{\left( \frac{2}{2^{\ast }}\right) ^{k}}=\left( 
\frac{c}{\left( \overline{R}-\overline{\rho }\right) ^{2}}\right)
^{\sum_{k=0}^{\infty }\left( \frac{2}{2^{\ast }}\right) ^{k}}\cdot (2^{\ast
})^{\sum_{k=0}^{\infty }k\left( \frac{2}{2^{\ast }}\right) ^{k}}
\end{equation*}%
\begin{equation*}
=\frac{c}{\left( \overline{R}-\overline{\rho }\right) ^{n}},
\end{equation*}%
for every $n\geq 3$; otherwise, if $n=2$, then for every $\varepsilon >0$ we
can choose $2^{\ast }>2$ so that $C=\frac{C}{(\overline{R}-\overline{\rho }%
)^{2+\varepsilon }}$.

We observe that the function $1+(t-1)^{\alpha}\mathcal{K}_M^{\tau}(t)t^{2%
\tau}\geq 1+(t-1)^{\alpha}t^{\tau}\left(h^{\prime}(t)\right)^{\tau}$, since $%
\mathcal{K}_M(t)\geq\frac{h^{\prime}(t)}{t}$ for every $t\geq 1$. Moreover,
since $h^{\prime}$ is increasing and $\tau\geq 1$ we have that $%
1+(t-1)^{\alpha}t^{\tau}\left(h^{\prime}(t)\right)^{\tau}\geq
1+(t-1)^{\alpha+\tau}\left(h^{\prime}(1)\right)^{\tau}$. Then, for every $%
i\geq 0$ we have 
\begin{equation}  \label{after_sobolev_33}
\left[\int_{\tilde{B}_{\rho_{i+1}}}\left( \left\vert
Du\right\vert-1\right)^{\left( k_i+\frac{\tau}{\left(2^{\ast}/2\right)^{i+1}}%
\right)\left(2^{\ast}/2\right)^{i+1}} \,dx \right]^{\left(2/2^{\ast}%
\right)^{i+1}}
\end{equation}
\begin{equation*}
\leq C \int_{\tilde{B}_{\overline{R}}} \left( 1+ \left\vert
Du\right\vert^{2\tau} \mathcal{K}_M^{\tau}\left( \left\vert Du\right\vert
\right)\right)\, dx.
\end{equation*}
Finally we go to the limit as $i\to\infty$ and we obtain 
\begin{equation*}
\sup\left\{ \left( \left\vert Du(x)\right\vert-1\right)^{\left( 1-\beta-%
\frac{2}{2^{\ast}}\tau\right)n }:\, x\in \tilde{B}_{\bar{\rho}}\right\}
\end{equation*}
\begin{equation*}
=\lim_{i\to\infty} \left[\int_{\tilde{B}_{\rho_{i+1}}}\left( \left\vert
Du\right\vert-1\right)^{\left( k_i+\frac{\tau}{\left(2^{\ast}/2\right)^{i+1}}%
\right)\left(2^{\ast}/2\right)^{i+1}} \,dx \right]^{\left(2/2^{\ast}%
\right)^{i+1}}
\end{equation*}
\begin{equation*}
\leq \frac{C}{\left(\bar{R}-\bar{\rho}\right)^n}\int_{\tilde{B}_{\bar{R}%
}}\left( 1+ \left\vert Du \right\vert^{2\tau}\mathcal{K}_M^{\tau}\left(\left%
\vert Du\right\vert \right)\right)\,dx.
\end{equation*}
\end{proof}

\begin{lemma}
\label{lemma2} Let $g, h$ respectively satisfy (\ref{main assumptions}) and (%
\ref{the function h(t)}). Suppose that the supplementary condition (\ref%
{suppl_ass}) is satisfied. Let $u\in W_{\limfunc{loc}}^{1,1}\left( \Omega ,%
\mathbb{R}^{m}\right) $ be a local minimizer of (\ref{energy-integral 1}).
Then, for every $\varepsilon >0$ and for every $\rho ,R$ $(0<\rho <R)$ there
exists a constant $C$ such that 
\begin{equation}
\int_{\tilde{B}_{\rho }}\left( 1+\left\vert Du\right\vert ^{2\tau }\mathcal{K%
}_{M}^{\tau }\left( \left\vert Du\right\vert \right) \right) \,dx\leq C\left[
\int_{\tilde{B}_{R}}\left( 1+h\left( \left\vert Du\right\vert \right)
\right) \,dx\right] ^{\frac{\tau }{1-\beta }+\varepsilon }.  \label{lemma2_1}
\end{equation}%
The constant $C$ depends on $n,\varepsilon ,\vartheta ,\rho ,R,\beta,\alpha$
and $\sup \left\{ h^{\prime \prime }(t):\;t\in \left[ 0,1\right] \right\}$.
\end{lemma}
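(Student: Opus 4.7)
The strategy is to combine the pointwise bound of Lemma~\ref{K_lemma} with the a-priori $L^{\infty}$ estimate of Lemma~\ref{lemma1} through a self-referential iteration whose convergence is ensured by choosing the free parameter $\alpha$ close to $1$. First, Lemma~\ref{K_lemma} reduces the claim to bounding $\int_{\tilde{B}_{\rho}}(1+h(|Du|))^{\eta}$ by a power of $\int_{\tilde{B}_{R}}(1+h(|Du|))$, where $\eta=\alpha/(2-\alpha)$; since $\alpha\in(1,n/(n-1)]$ is free, $\eta$ can be made as close to $1$ as the $\varepsilon$ in the statement requires.

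The next step is the interpolation $(1+h)^{\eta}=(1+h)^{\eta-1}(1+h)$, bounding the first factor pointwise by $\|1+h\|_{L^{\infty}(\tilde{B}_{r})}^{\eta-1}$ on an intermediate ball $\tilde{B}_{r}$, $\rho<r<R$. Since $h$ is monotone, $\|1+h\|_{L^{\infty}(\tilde{B}_{r})}=1+h(\|Du\|_{L^{\infty}(\tilde{B}_{r})})$, and the latter quantity is controlled by Lemma~\ref{lemma1} applied on a slightly smaller pair of radii and combined once more with Lemma~\ref{K_lemma}, yielding $\|Du\|_{L^{\infty}(\tilde{B}_{r})}^{A}\leq C(R-r)^{-n}\int_{\tilde{B}_{R}}(1+h)^{\eta}$ with $A=(1-\beta-2\tau/2^{\ast})n>0$.

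These two steps close into a self-referential inequality: $\int_{\tilde{B}_{\rho}}(1+h)^{\eta}$ is bounded by a power of itself times $\int_{\tilde{B}_{R}}(1+h)$. I would close the loop by a Moser-type iteration on a chain of shrinking radii $\rho_{k}\downarrow\rho$, at each step lowering the effective exponent by a factor determined by $\eta-1$. Choosing $\alpha$ close enough to $1$ so that $\eta-1$ lies strictly below the threshold determined by $A/n$ and $\tau$, the iteration converges geometrically and produces the stated bound on $\int_{\tilde{B}_{\rho}}(1+|Du|^{2\tau}\mathcal{K}_{M}^{\tau}(|Du|))$.

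The main obstacle is the bookkeeping of constants across the iteration: the Sobolev prefactor in Lemma~\ref{lemma1} grows like $(R-r)^{-n}$, so the shrinking-radius sequence must be tuned so that the infinite product of constants remains finite, much as in the telescoping estimate at the end of the proof of Lemma~\ref{lemma1}. The target exponent $\tau/(1-\beta)+\varepsilon$ comes out of the algebra linking $A=n(1-\beta-2\tau/2^{\ast})$, the Sobolev gain $2^{\ast}/2$, and the weight $\tau$ in Lemma~\ref{K_lemma}; the $\varepsilon$ absorbs both the slack $\eta-1$ and the accumulated iteration losses.
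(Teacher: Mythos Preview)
Your bootstrap has a genuine gap that cannot be closed by tuning parameters. The step where you pass from $\|Du\|_{L^{\infty}(\tilde B_r)}^{A}$ to $(1+h(\|Du\|_{L^{\infty}(\tilde B_r)}))^{\eta-1}$ requires an inequality of the form $h(t)^{\eta-1}\leq Ct^{B}$ for some fixed $B$. But the assumptions of the lemma allow $h$ to grow arbitrarily fast, in particular exponentially (e.g.\ $h(t)=e^{a_{m}t^{2}}$, one of the paper's model examples). For such $h$, no matter how small $\eta-1>0$ is, $h(t)^{\eta-1}$ still grows super-polynomially and cannot be dominated by the power $t^{A}$ that Lemma~\ref{lemma1} provides. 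Your self-referential inequality then reads, schematically, $Q_{\rho}\leq \exp\bigl(c\,Q_{R}^{2/A}\bigr)^{\eta-1}\cdot\int(1+h)$, which no amount of geometric iteration on radii will close.

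The paper avoids evaluating $h$ at the sup-norm altogether. Instead of invoking the \emph{conclusion} of Lemma~\ref{lemma1}, it reuses the intermediate Caccioppoli--Sobolev estimate (\ref{before Phi}) from that proof with $\gamma=0$ and combines it with (\ref{sec_der_lemma_31}) to obtain a reverse H\"older inequality directly for $V:=1+|Du|^{2\tau}\mathcal{K}_{M}^{\tau}(|Du|)$, namely
\[
\Bigl(\int_{\tilde B_{\rho}}V^{\nu}\,dx\Bigr)^{2/2^{\ast}}\leq \frac{c}{(R-\rho)^{2}}\int_{\tilde B_{R}}V\,dx,
\qquad 1<\nu<\frac{2^{\ast}(1-\beta)}{2\tau}.
\]
One then writes $V=V^{\nu/\mu}V^{1-\nu/\mu}$ with $\mu>2^{\ast}/2$, applies H\"older, and iterates on a chain of radii as in \cite{Marcellini 1996,Marcellini-Papi 2006}. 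The factor carrying the exponent $(\mu-\nu)/(\mu-1)<1$ is the only place Lemma~\ref{K_lemma} enters, and it does so with a sub-unit power of $V$, so that $V^{(\mu-\nu)/(\mu-1)}\leq C(1+h)$ pointwise; no growth information about $h$ beyond (\ref{the function h(t)}) is required. Letting $\nu\to 2^{\ast}(1-\beta)/(2\tau)$ and $\mu\to\infty$ produces the target exponent $\tau/(1-\beta)+\varepsilon$. The essential difference is that the paper's iteration takes place at the level of the $L^{\nu}$-norm of $V$, not at the level of $L^{\infty}$ of $|Du|$, and therefore never has to invert the possibly exponential map $t\mapsto h(t)$.
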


\begin{proof}
In Lemma \ref{lemma for phi for any gamma} we have considered parameters $%
\alpha$ and $\gamma$ such that $\alpha\in\left( 1, \frac{2n}{2n-1}\right]$
and $\gamma\geq 0$. Here we restrict ourselves to the case $1<\alpha\leq%
\frac{2n\tau}{n(1+\tau)-1}$ and $\gamma=0$. Then, Lemma \ref{lemma for phi
for any gamma} holds for any $\nu\in \left[1, \frac{2^{\ast}(2-\alpha)}{%
2\alpha}\right]$. Since $\tau< \frac{2^{\ast}}{2}(1-\beta)$, we have that $%
1<(1-\beta )\frac{2^{\ast}}{2\tau}$, therefore it is possible to limit $\nu$
to satisfy the condition $1<\nu< (1-\beta) \frac{2^{\ast}}{2\tau}$. Finally,
since $\beta>\frac{1}{n} $, we have $\alpha\leq\frac{2n\tau}{n(1+\tau)-1}<%
\frac{2\tau}{1-\beta+\tau}$ which implies $1-\beta<\frac{2-\alpha}{\alpha}%
\tau$. Thus, 
\begin{equation*}
\nu\in \left[ 1, (1-\beta)\frac{2^{\ast}}{2\tau}\right]\subseteq\left[1,
2^{\ast}\frac{2-\alpha}{2\alpha}\right]
\end{equation*}
whence we obtain that the conditions of Lemma \ref{lemma for phi for any
gamma} are satisfied. Therefore there exists a constant $c$ such that 
\begin{equation*}
\left[G(t)\right]^{2^{\ast}}=\left[ \left( 1+\int_0^t \sqrt{\Phi(s)\mathcal{K%
}_m(s)}\,ds\right)^{\frac{2^{\ast}}{\nu}}\right]^{\nu}\geq c\left( 1+\left(
(t-1) t^{-\beta} \right)^{\frac{2^{\ast}}{\nu}}\mathcal{K}%
_M^{\tau}(t)\right)^{\nu}.
\end{equation*}
Under the notations of lemma \ref{lemma1}, formula (\ref{after_sobolev_3})
with $\gamma=0$ becomes

\begin{equation}  \label{lemma2_2}
\left[\int_{\tilde{B}_R}\eta^{2^{\ast}} \left(1+\left( \left\vert
Du\right\vert-1\right)^{ \left( 2^{\ast}-(2\tau+2^{\ast}\beta) \right)\frac{1%
}{\nu}}\mathcal{K}_M^{\tau}\left(\left\vert Du\right\vert\right) \left\vert
Du\right\vert^{\frac{2\tau}{\nu}} \right)^{\nu}\,dx \right]^{\frac{2}{%
2^{\ast}}}
\end{equation}
\begin{equation*}
\leq c\int_{\tilde{B}_R} \left( \eta^2+\left\vert D\eta\right\vert^2\right)
\left( 1 +\mathcal{K}_M^{\tau}\left(\left\vert
Du\right\vert\right)\left\vert Du\right\vert^{2\tau} \right)\,dx.
\end{equation*}
Moreover, since there exists a constant $C_1$ such that 
\begin{equation*}
\left[\int_{\tilde{B}_R}\eta^{2^{\ast}} \left(1+\left( \left\vert
Du\right\vert-1\right)^{ \left( 2^{\ast}-(2\tau+2^{\ast}\beta) \right)\frac{1%
}{\nu}}\mathcal{K}_M^{\tau}\left(\left\vert Du\right\vert\right) \left\vert
Du\right\vert^{\frac{2\tau}{\nu}} \right)^{\nu}\,dx \right]^{\frac{2}{%
2^{\ast}}}
\end{equation*}
\begin{equation*}
\geq C_1\left[\int_{\tilde{B}_R}\eta^{2^{\ast}} \left(1+ \left\vert
Du\right\vert^{ 2^{\ast}\frac{1-\beta}{\nu}}\mathcal{K}_M^{\tau}\left(\left%
\vert Du\right\vert\right) \right)^{\nu}\,dx \right]^{\frac{2}{2^{\ast}}},
\end{equation*}
(\ref{lemma2_2}) gives 
\begin{equation}  \label{lemma2_21}
\left[\int_{\tilde{B}_R}\eta^{2^{\ast}} \left(1+ \left\vert Du\right\vert^{
2^{\ast}\frac{1-\beta}{\nu}}\mathcal{K}_M^{\tau}\left(\left\vert
Du\right\vert\right) \right)^{\nu}\,dx \right]^{\frac{2}{2^{\ast}}}
\end{equation}
\begin{equation*}
\leq c\int_{\tilde{B}_R} \left( \eta^2+\left\vert D\eta\right\vert^2\right)
\left( 1 +\mathcal{K}_M^{\tau}\left(\left\vert
Du\right\vert\right)\left\vert Du\right\vert^{2\tau} \right)\,dx.
\end{equation*}

Since $\nu<(1-\beta)\frac{2^{\ast}}{2\tau}$, we have $2^{\ast}\frac{1-\beta}{%
\nu} >2\tau$ and then, if we define $V=V(x)=1+\left\vert
Du\right\vert^{2\tau}\mathcal{K}_M^{\tau}\left( \left\vert Du\right\vert
\right)$, (\ref{lemma2_21}) becomes

\begin{equation}  \label{lemma2_2_2}
\left[\int_{\tilde{B}_R}\eta^{2^{\ast}} V^{\nu}\,dx \right]^{\frac{2}{%
2^{\ast}}}\leq c\int_{\tilde{B}_R} \left( \eta^2+\left\vert
D\eta\right\vert^2\right)V \,dx.
\end{equation}

As in the previous Lemma \ref{lemma1}, we consider a test function $\eta$
equal to $1$ in $\tilde{B}_{\rho}$ with $\left\vert D\eta\right\vert\leq%
\frac{2}{R-\rho}$ and we obtain 
\begin{equation}  \label{lemma2_2_3}
\left(\int_{\tilde{B}_{\rho}}V^{\nu}\,dx\right)^{\frac{2}{2^{\ast}}}\leq%
\frac{c}{(R-\rho)^2}\int_{\tilde{B}_R} V\,dx.
\end{equation}
Let $\mu>\frac{2^{\ast}}{2}$, then by the H\"older inequality we have 
\begin{equation}  \label{lemma2_2_4}
\left( \int_{\tilde{B}_{\rho}}V^{\nu}\,dx\right)^{\frac{2}{2^{\ast}}}\leq 
\frac{c}{(R-\rho)^2}\int_{\tilde{B}_{R}} V^{\frac{\nu}{\mu}}V^{1-\frac{\nu}{%
\mu}}\,dx
\end{equation}
\begin{equation*}
\leq\left( \int_{\tilde{B}_{R}} V^{\nu}\,dx\right)^{\frac{1}{\mu}}\left(
\int_{\tilde{B}_R}V^{\frac{\mu-\nu}{\mu-1}}\,dx \right)^{\frac{\mu-1}{\mu}}.
\end{equation*}
Let $R_0$ and $\rho_0$ be fixed. For any $i\in\mathbb{N}$ we consider (\ref%
{lemma2_2_4}) with $R=\rho_i$ and $\rho=\rho_{i-1}$, where $\rho_i=R_0-\frac{%
R_0-\rho_0}{2^i}$. By iterating (\ref{lemma2_2_4}), since $R-\rho=\frac{%
R_0-\rho_0}{2^i}$, similarly to the computation in \cite{Marcellini 1996}
and \cite{Marcellini-Papi 2006} we can write 
\begin{equation}  \label{lemma2_2_5}
\int_{\tilde{B}_{\rho_0}}V^{\nu}\,dx \leq c \left(\frac{1}{(R_0-\rho_0)^2}%
\right)^{\left( \frac{2^{\ast}\mu }{ 2\mu-2 }\right)^i} \left( \int_{\tilde{B%
}_{\rho_i}}V^{\nu}\,dx\right)^{\left( \frac{2^{\ast}}{2\mu}\right)^i}
\end{equation}
\begin{equation*}
\cdot \left( \int_{\tilde{B}_{\rho_0}}V^{\frac{\mu-\nu}{\mu-1}}\,dx \right)^{%
\frac{2^{\ast}(\mu-1)}{2\mu-2^{\ast}}}.
\end{equation*}
Since $\frac{\mu-\nu}{\mu-1}<1$ we can apply Lemma \ref{K_lemma} and obtain 
\begin{equation*}
\int_{\tilde{B}_{\rho_0}}V^{\nu}\,dx \leq c \left(\frac{1}{(R_0-\rho_0)^2}%
\right)^{ \frac{2^{\ast}\mu }{ 2\mu-2^{\ast} }} \left( \int_{B_{\rho_i}}V^{%
\frac{1}{\sigma}}\,dx\right)^{\left( \frac{2^{\ast}}{2\mu}\right)^i}
\end{equation*}
\begin{equation*}
\cdot \left( \int_{\tilde{B}_{\rho_0}} \left[ 1+h\left(\left\vert
Du\right\vert \right)\right] \,dx \right)^{\frac{2^{\ast}(\mu-1)}{%
2\mu-2^{\ast}}}.
\end{equation*}
In the limit as $i\to\infty$ we get 
\begin{equation*}
\int_{\tilde{B}_{\rho_0}}V^{\nu}\,dx \leq c\left(\frac{1}{(R_0-\rho_0)^2}%
\right)^{ \frac{2^{\ast}\mu }{ 2\mu-2^{\ast} }} \left(\int_{\tilde{B}%
_{\rho_0}} \left[ 1+h\left(\left\vert Du\right\vert \right)\right] \,dx
\right)^{\frac{2^{\ast}(\mu-1)}{2\mu-2^{\ast}}}.
\end{equation*}
Finally, 
\begin{equation}  \label{lemma2_2_6}
\int_{B_{\rho_0}} V\, dx \leq \vert \tilde{B}_{\rho_0}\vert^{1-\frac{1}{\nu}%
} \left( \int_{\tilde{B}_{\rho_0}} V^{\nu}\,dx\right)^{\frac{1}{\nu}}
\end{equation}
\begin{equation*}
\leq c \left(\frac{1}{(R_0-\rho_0)^2}\right)^{ \frac{2^{\ast}\mu }{
(2\mu-2^{\ast})\nu }} \left(\int_{\tilde{B}_{R_0}} \left[ 1+h\left(\left%
\vert Du\right\vert \right)\right] \,dx \right)^{\frac{2^{\ast}(\mu-1)}{%
(2\mu-2^{\ast})\nu}}.
\end{equation*}
As $\nu\to \frac{2^{\ast}(1-\beta)}{2\tau}$ and $\mu\to\infty$ the two
exponents in (\ref{lemma2_2_6}) converge to $\frac{\tau}{1-\beta}$ and we
have the result.
\end{proof}

By combining Lemmata \ref{lemma1}, \ref{lemma2} and by using again (\ref%
{main assumptions}), we obtain

\begin{equation}
\left\Vert Du\right\Vert _{L^{\infty }\left( \tilde{B}_{\rho },\mathbb{R}%
^{m\times n}\right) }^{\left( 1-\beta -\frac{2}{2^{\ast }}\tau \right)
n}\leq C\int_{\tilde{B}_{R}}\left( 1+\left\vert Du\right\vert ^{2\tau }%
\mathcal{K}_{M}^{\tau }\left( \left\vert Du\right\vert \right) \right) \,dx
\label{finale_1}
\end{equation}%
\begin{equation*}
\leq C^{\prime }\left[ \int_{\tilde{B}_{R}}\left( 1+h\left( \left\vert
Du\right\vert \right) \right) \,dx\right] ^{\frac{\tau }{1-\beta }%
+\varepsilon }\leq C^{\prime }\left[ \int_{\tilde{B}_{R}}\left( 1+g\left(
x,\left\vert Du\right\vert \right) \right) \,dx\right] ^{\frac{\tau }{%
1-\beta }+\varepsilon },
\end{equation*}%
since, by (\ref{main assumptions}) and the fact that $h(0)=0=g(x,0)$, $%
h\left( t\right) =\int_{0}^{t}h^{\prime }\left( s\right) \,ds\leq
\int_{0}^{t}g_{t}\left( x,s\right) \,ds=g\left( x,t\right) $. In order to go
from $\tilde{B}_{\rho },\tilde{B}_{R}$ to $B_{\rho },B_{R}$ we observe that $%
\left\Vert Du\right\Vert _{L^{\infty }\left( \tilde{B}_{\rho },\mathbb{R}%
^{m\times n}\right) }\leq 1+\left\Vert Du\right\Vert _{L^{\infty }\left(
B_{\rho },\mathbb{R}^{m\times n}\right) }$ and from (\ref{finale_1})\ we
also get%
\begin{equation*}
\left\Vert Du\right\Vert _{L^{\infty }\left( B_{\rho },\mathbb{R}^{m\times
n}\right) }^{\left( 1-\beta -\frac{2}{2^{\ast }}\tau \right) n}\leq
C^{\prime \prime }\left[ \int_{B_{R}}\left( 1+g\left( x,\left\vert
Du\right\vert \right) \right) \,dx\right] ^{\frac{\tau }{1-\beta }%
+\varepsilon }.
\end{equation*}
We summarize in the next statement the a-priori estimate that we have proved.

\begin{theorem}
\label{thm_1} Let $g,h$ respectively\ satisfy (\ref{main assumptions}) and (%
\ref{the function h(t)}). Suppose that the supplementary condition (\ref%
{suppl_ass}) is satisfied. Let $u\in W_{\limfunc{loc}}^{1,1}\left( \Omega ,%
\mathbb{R}^{m}\right) $ be a local minimizer of (\ref{energy-integral 1}).
Then, for every $\varepsilon >0$ and for every $\rho ,R$ $(0<\rho <R)$,
there exists a constant $C$ such that 
\begin{equation}
\left\Vert Du\right\Vert _{L^{\infty }\left( B_{\rho },\mathbb{R}^{m\times
n}\right) }^{\left( 1-\beta -\frac{2}{2^{\ast }}\tau \right) n}\leq C\left[
\int_{B_{R}}\left( 1+g\left( x,\left\vert Du\right\vert \right) \right) \,dx%
\right] ^{\frac{\tau }{1-\beta }+\varepsilon },
\label{conclusion in the main theorem finale}
\end{equation}%
where $\tau =(2\vartheta -1)\vartheta $.\ The constant $C$ depends on $%
n,\varepsilon ,\vartheta ,\rho ,R,t_{0},\beta ,\alpha $ and $\sup \bigl\{
h^{\prime \prime }(t):$ $\;t\in \left[ 0,t_{0}\right] \bigr\} $, but is
independent of the constants in supplementary condition (\ref{suppl_ass}).
\end{theorem}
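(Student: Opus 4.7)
The plan is to read the conclusion off by composing Lemma~\ref{lemma1} with Lemma~\ref{lemma2}, then converting the estimate from $h$ to $g$ and from the level set $\tilde B_\rho$ back to the full ball $B_\rho$. No new analytic input is required: the substantive work (choice of test function $\varphi^\alpha=\eta^2 u_{x_k}^\alpha \Phi(|Du|)$, manipulation of Euler's second variation, splitting of the mixed-derivative term via the third line of~(\ref{main assumptions}), Moser-type iteration and the Sobolev embedding) was carried out in the two preceding lemmata.

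First, I would apply Lemma~\ref{lemma1} with $\tau=(2\vartheta-1)\vartheta$ to get
\[
\|Du\|_{L^\infty(\tilde B_\rho)}^{(1-\beta-\frac{2}{2^\ast}\tau)n}
\leq \frac{C}{(R-\rho)^n}\int_{\tilde B_R}\bigl(1+|Du|^{2\tau}\mathcal{K}_M^\tau(|Du|)\bigr)\,dx.
\]
Then, given $\varepsilon>0$, Lemma~\ref{lemma2} controls the right-hand side by
\[
C\Bigl[\int_{\tilde B_R}\bigl(1+h(|Du|)\bigr)\,dx\Bigr]^{\frac{\tau}{1-\beta}+\varepsilon}.
\]
Note that exponent positivity $\tau/(1-\beta)>0$ and $1-\beta-\frac{2}{2^\ast}\tau>0$ are both guaranteed by the compatibility constraint $(2\vartheta-1)\vartheta<(1-\beta)\frac{2^\ast}{2}$ built into~(\ref{the function h(t)}).

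Second, I would replace $h(|Du|)$ by $g(x,|Du|)$. The left half of the first line of~(\ref{main assumptions}) gives $m h'(t)\leq g_t(x,t)$ for $t\geq 1$; combined with $h(0)=g(x,0)=0$ and the uniform bound on $h''$ on $[0,1]$ (so that $h$ is controlled on the small-gradient region), integration from $0$ to $t$ yields $m h(t)\leq g(x,t)+C_0$, whence
\[
\int_{\tilde B_R}\bigl(1+h(|Du|)\bigr)\,dx\leq C\int_{B_R}\bigl(1+g(x,|Du|)\bigr)\,dx.
\]
Finally, to pass from $\tilde B_\rho$ to $B_\rho$, I note that on $B_\rho\setminus \tilde B_\rho$ the pointwise bound $|Du|<1$ holds by definition, so
\[
\|Du\|_{L^\infty(B_\rho)}\leq 1+\|Du\|_{L^\infty(\tilde B_\rho)},
\]
and raising to the exponent $(1-\beta-\frac{2}{2^\ast}\tau)n$ costs only a harmless additive $1$, which is absorbed into the right-hand side since $\int_{B_R}(1+g(x,|Du|))\,dx\geq |B_R|$.

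The one point to monitor is that the parameters $\nu,\mu$ in Lemma~\ref{lemma2} must be chosen so that $\nu\nearrow 2^\ast(1-\beta)/(2\tau)$ and $\mu\nearrow\infty$, which drives the outer exponent to $\tau/(1-\beta)+\varepsilon$ for arbitrarily small $\varepsilon>0$; the dependencies of the final constant $C$ on $n,\varepsilon,\vartheta,\rho,R,t_0,\beta,\alpha$ and $\sup_{[0,t_0]}h''$ then follow directly from those recorded in Lemmata~\ref{lemma1} and~\ref{lemma2}. No genuine obstacle arises at this stage; the whole difficulty of the paper is concentrated in the two preceding lemmata, and the present theorem is their clean synthesis.
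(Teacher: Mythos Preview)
Your proposal is correct and follows essentially the same route as the paper: combine Lemma~\ref{lemma1} with Lemma~\ref{lemma2}, convert $h$ to $g$ via the first line of~(\ref{main assumptions}) and $h(0)=g(x,0)=0$, then pass from $\tilde B_\rho$ to $B_\rho$ using the trivial pointwise bound $|Du|<1$ off the level set. Your treatment of the $h\to g$ step is in fact slightly more careful than the paper's, since you flag the need to handle $t\in[0,t_0]$ via $\sup_{[0,t_0]}h''$, whereas the paper simply writes $h(t)=\int_0^t h'(s)\,ds\leq\int_0^t g_t(x,s)\,ds=g(x,t)$ as if~(\ref{main assumptions}) held for all $t\geq 0$.
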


\bigskip

We note that $1-\beta -\frac{2}{2^{\ast }}\tau >0$ since $\tau <\left(
1-\beta \right) \frac{2^{\ast }}{2}$. Note also that $\left( 1-\beta -\frac{2%
}{2^{\ast }}\tau \right) n<1$; in fact, since $\tau \geq 1$ and $\beta >%
\frac{1}{n}$,%
\begin{equation*}
\left( 1-\beta -\frac{2}{2^{\ast }}\tau \right) n<\left( 1-\frac{1}{n}-\frac{%
2}{2^{\ast }}\right) n=1.
\end{equation*}%
Moreover $\frac{\tau }{1-\beta }+\varepsilon >1$ and thus (\ref{conclusion
in the main theorem finale}) gives the final representation of the a-priori
estimate as stated in (\ref{conclusion in the main theorem}).

\end{document}